\documentclass[reqno]{amsart}
\usepackage{enumitem}
\usepackage[all]{xy} 
\usepackage{multirow} 
\usepackage{tabu}
\usepackage{amssymb}
\usepackage{leftindex}
\usepackage{bm}
\usepackage{array,longtable}
\usepackage[T1]{fontenc}
\usepackage{ltablex}
\usepackage{lmodern}
\usepackage{float}
\usepackage{tikz-cd}
\usepackage[hidelinks]{hyperref}
\usepackage{tabularx,booktabs,makecell,array}

\newtheorem{theorem}{Theorem}[section]
\newtheorem{lemma}[theorem]{Lemma}

\newtheorem{prop}[theorem]{Proposition}

\newtheorem{corollary}[theorem]{Corollary}

\theoremstyle{definition}
\newtheorem{definition}[theorem]{Definition}

\newtheorem{remark}[theorem]{Remark}
\numberwithin{equation}{section}

\newcommand{\Br}{\operatorname{Br}}

\title{Torsors of Generalized del Pezzo Tori and  Brauer Groups}

\author[Pankaj Singh]{Pankaj Singh}

\address{Department of Mathematics, University of South Carolina, 
Columbia, SC 29208}
\email{pksingh@email.sc.edu}
\date{\today}
\subjclass[2020]{%
11E72, 
16K50, 
20C20} 

\keywords{Algebraic tori, Mackey functors, Galois cohomology}

\begin{document}

\begin{abstract}
Using cohomological Mackey functors, we give an explicit Brauer group
classification of torsors under tori of generalized del Pezzo
varieties. This identifies the image of Lamarche's
Brauer invariants and extends the Brauer-theoretic description in Blunk's
classification of degree-six del Pezzo surfaces.

We also determine which finite field extensions can appear in other
Brauer group presentations of these torsors. The same tori act on 
 Losev-Manin spaces, so the computation applies to these toric varieties as well.
\end{abstract}

\maketitle


\section{Introduction}
\label{sec:intro}

Let \(k_s\) be a separable closure of a field \(k\).
Let \(T\) be an algebraic torus over \(k\). The group
\[
H^1(k,T)
\]
classifies \(T\)-torsors over \(k\).
By a \(k\)-form of a \(k\)-variety
\(X_0\), we mean a \(k\)-variety \(X\) such that
\[
X_{k_s}\simeq (X_0)_{k_s}.
\]
The \(k\)-forms of a $T$-toric variety are in bijection with $H^1(k,T)$.
This correspondence is used, for example, in
Blunk's classification of degree-six del Pezzo surfaces up to isomorphism
preserving the torus action \cite[Theorem~2.4]{BLUNK}; see also
\cite{Duncan2016} for the general framework of twisted forms of toric varieties.

Suppose \(T\) is split by a finite Galois extension \(E/k\).
Following \cite{retforms}, a \emph{(relative) Brauer presentation}
of \(H^1(k,T)\) is an isomorphism
\[
H^1(k,T)
\cong
\ker\!\left(
\bigoplus_i \Br(E/F_i)
\longrightarrow
\bigoplus_j \Br(E/R_j)
\right),
\]
where the map is built from restriction, corestriction, and conjugation
(see \S~\ref{brauerparametrization} {below}).
Here \(F_i\) and \(R_j\) are intermediate fields of \(E/k\), and
\[
\Br(E/F):=\ker\bigl(\Br(F)\to\Br(E)\bigr).
\]
We call the fields \(F_i\) the \emph{source fields}
and the fields \(R_j\) the \emph{target fields}.

This paper studies Brauer presentations for the tori associated with a specific
common lattice. For each even \(n\), let \(V_n\) denote the split generalized
del Pezzo toric variety over \(k\), and let \(L_{n+1}\) denote the split
Losev--Manin space. We use this indexing so that both varieties are
\(n\)-dimensional: \(V_n\) has dimension \(n\), and \(L_{n+1}\) is the toric
variety associated with the Weyl fan of type \(A_n\). In the toric category,
the torus action is part of the structure; for a toric \(T\)-variety over
\(k\), the dense open orbit is a \(T\)-torsor, and it is identified with \(T\)
only when that open orbit has a \(k\)-point.

Let \(G:=S_{n+1}\times S_2\). We consider the \(G\)-lattice

\[
M=\{(a_0,\dots,a_n)\in \mathbb Z^{n+1}\mid a_0+\cdots+a_n=0\},
\]
where
\(S_{n+1}\) acts by permuting the coordinates and the nontrivial element of
\(S_2\) acts by multiplication by \(-1\). In the main results of this paper, we
restrict to \(k\)-tori \(T\) whose character lattice over \(k_s\) is identified
with \(M\), and whose minimal Galois splitting field \(E/k\) satisfies
\(\operatorname{Gal}(E/k)\cong G\), with this isomorphism realizing the
specified \(G\)-action on \(M\).

By Lamarche's description of the split generalized del Pezzo variety \(V_n\),
this lattice is the character lattice of the torus acting on \(V_n\)
\cite[Lemma~2.3.3]{Lamarche}. The same lattice also appears as the character
lattice of the torus acting on the split Losev--Manin space \(L_{n+1}\), whose
fan is the Weyl fan of type \(A_n\) \cite[\S 2]{LosevManin2000}. Thus the Brauer
computations below apply to this common torus lattice in the case where the
Galois action realizes the full group \(G\).

Under the assumption \(\operatorname{Gal}(E/k)\cong G\), the field \(E\)
contains two distinguished subextensions: the quadratic extension
\(K=E^{S_{n+1}\times\{1\}}\) and the degree \(n+1\) extension
\(L=E^{S_n\times S_2}\), where \(S_n\leq S_{n+1}\) is the stabilizer of one
point. We now state the Brauer parametrization in this setting.

\medskip

\begin{theorem}[Brauer parametrization; see Theorem~\ref{thm:gen-dPn-relative-brauer}]
With notation as above,
\[
H^1(k,T)
\cong
\ker\!\left(
\Br(E/K)\oplus \Br(E/L)
\longrightarrow
\Br(E/KL)\oplus \Br(E/k)^{\oplus 2}
\right),
\]
where
\[
(B,Q)\longmapsto
\Bigl(
\operatorname{Res}_{KL/K}(B)+\operatorname{Res}_{KL/L}(Q),\,
\operatorname{Cor}_{K/k}(B),\,
\operatorname{Cor}_{L/k}(Q)
\Bigr).
\]
\end{theorem}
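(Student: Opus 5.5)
The strategy is to realize \(M\) inside an exact sequence of permutation and sign-permutation lattices, and then use the standard identification of \(H^1\) of a quasi-split (norm-one type) torus with relative Brauer groups.

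\textbf{Step 1: a presentation of \(M\).} Let \(\mathbb Z^-\) denote \(\mathbb Z\) on which \(S_{n+1}\) acts trivially and \(S_2\) acts by \(-1\). Then \(\mathbb Z^-\cong \mathbb Z[G/H_K]/\mathbb Z\) with \(H_K=S_{n+1}\times 1\), so \(\mathbb Z^-\) is the character lattice of \(R^1_{K/k}\mathbb G_m\). Put \(P_L:=\mathbb Z[G/(S_n\times 1)]\); this is the permutation lattice of \(KL\), and it is \(\mathbb Z^{n+1}\otimes\mathbb Z[S_2]\). The sign-twisted permutation module \(\mathbb Z^{n+1}\otimes\mathbb Z^-\) is then the character lattice of the torus \(\ker(R_{KL/L}\mathbb G_m\to\mathbb G_m)\), viewed after Weil restriction from \(L\). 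The lattice \(M\) is the kernel of the sum map
\[
\mathbb Z^{n+1}\otimes\mathbb Z^-\to\mathbb Z^-,
\]
which is \(G\)-equivariant.

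\textbf{Step 2: the dual torus sequence.} Dualizing gives an exact sequence of tori
\[
1\to R^1_{K/k}\mathbb G_m\to R_{L/k}R^1_{KL/L}\mathbb G_m\to T\to 1 ,
\]
or a sequence in the other direction, depending on the contravariance conventions fixed in \S\ref{brauerparametrization}. Its long exact sequence in Galois cohomology relates \(H^1(k,T)\) to the groups \(H^i\) of norm-one tori. By Hilbert 90 and Shapiro's lemma, \(H^2(k,R^1_{F'/F}\mathbb G_m)\) injects into \(H^2(F,R_{F'/F}\mathbb G_m)=\Br(F')\), with image determined by corestriction. Alternatively, \(H^1(F,R^1_{F'/F}\mathbb G_m)=F^\times/N F'^\times\). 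This second description is not in Brauer form, however.

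\textbf{Step 3: use the paper's machinery.} Because of the mismatch just noted, I would instead rely on the cohomological Mackey functor framework the paper announces. The relevant functor is \(H\mapsto \Br(E/E^H)\), that is, \(H^2(H,E^\times)\). I would find a resolution
\[
0\to M\to P_0\to P_1
\]
with \(P_0=\mathbb Z[G/H_K]\oplus\mathbb Z[G/H_L]\) and \(P_1=\mathbb Z[G/H_{KL}]\oplus\mathbb Z\oplus\mathbb Z\). Here the first map sends the generators to the orbit-sum expressions dual to \(\operatorname{Res}\) and \(\operatorname{Cor}\): the \(\mathbb Z[G/H_L]\) component is given by \(a\mapsto\) the coordinates, and the \(\mathbb Z[G/H_K]\) component by the sign. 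Concretely, \(M\) embeds into \(\mathbb Z^{n+1}\oplus\mathbb Z^-\)-type pieces. Exactness then needs to be checked on underlying groups, together with the appropriate \(H^1\)-vanishing. Applying \(\operatorname{Ext}\)/Hom into \(E^\times\), combined with Shapiro's lemma and Hilbert 90 (\(H^1(H,E^\times)=0\)), gives
\[
H^1(k,T)=H^1(G,T(E))\cong\ker\bigl(H^2(G,P_0^\vee\otimes E^\times)\to H^2(G,P_1^\vee\otimes E^\times)\bigr).
\]
This is exactly the displayed kernel. The maps induced by the orbit maps become restriction (from the inclusion \(H_{KL}\subset H_K,H_L\)) and corestriction (to \(G\)). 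The opposite sign between the \(B\)- and \(Q\)-components in the \(KL\) term is irrelevant, since it can be absorbed by an automorphism.

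\textbf{Main obstacle.} The hard step is showing the kernel is exactly \(H^1\), with no extra cokernel contribution. This requires \(H^1(G,P_0^\vee\otimes E^\times)=0\), which holds by Hilbert 90, and it requires the cokernel of \(P_0\to P_1\) to contribute nothing in degree \(1\). To handle the cokernel I would compute \(C=\operatorname{coker}(P_0\to P_1)\) explicitly (I expect it to be a small trivial-type module) and verify, via the Mackey functor relations \(\operatorname{Cor}\circ\operatorname{Res}=\) index, that the map \(H^1(G,C^\vee\otimes E^\times)\to H^2\) does not interfere. If this verification fails, I would prove the result by checking it prime by prime on Sylow subgroups, as the Mackey framework allows.
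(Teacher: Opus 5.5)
Your Step~3 does not work as stated. There is no exact sequence $0\to M\to P_0\to P_1$ with $P_0=\mathbb Z[G/H_K]\oplus\mathbb Z[G/H_L]$, because $\operatorname{Hom}_G(M,P_0)=0$:
\begin{itemize}
\item A $G$-map $\phi\colon M\to\mathbb Z[G/H_L]$ satisfies $\phi(-x)=\phi(\sigma x)=\sigma\phi(x)=\phi(x)$, so $\phi=0$. In particular your ``coordinate'' map is not $\sigma$-equivariant.
\item By Frobenius reciprocity, $\operatorname{Hom}_G(M,\mathbb Z[G/H_K])\cong\operatorname{Hom}_{S_{n+1}}(M,\mathbb Z)$. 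This is $0$, since the transposition $(i\,j)$ negates $(u_i-u_j)\otimes(1-\sigma)$.
\end{itemize}
The lattice $M$ actually embeds in is $\mathbb Z[S_{n+1}/S_n]\otimes\mathbb Z^-$. It is sign-twisted, so Shapiro's lemma and Hilbert~90 produce no Brauer groups from it.

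Even formally, the arrows point the wrong way. From $0\to M\to P_0\to I\to 0$, the long exact sequence for $\operatorname{Ext}^{\bullet}_G(-,E^\times)$ embeds $H^1(k,T)=\operatorname{Ext}^1_G(M,E^\times)$ into $\operatorname{Ext}^2_G(I,E^\times)$. It does not embed it into $H^2(G,P_0^\vee\otimes E^\times)=\Br(E/K)\oplus\Br(E/L)$. To land in the source Brauer groups, $M$ must be, up to a summand with vanishing $H^1$, the kernel of a \emph{surjection} from the degree-one permutation lattice onto $P_0$. Your treatment of the ``extra cokernel'' never uses that $n$ is even, and that hypothesis is exactly what removes the cokernel you were worried about.

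The paper uses precisely this direction. It takes $\Psi\colon Q_1=\mathbb Z[G/H_{KL}]\oplus\mathbb Z^{2}\to Q_0=\mathbb Z[G/H_K]\oplus\mathbb Z[G/H_L]$, built from the two projections and the norm elements $1+\sigma$ and $\sum_i u_i$. It proves prime by prime (Farahat's decompositions at $2$ and at odd $p\mid n+1$, then gluing) that $FP_{Q_1}\to FP_{Q_0}\to H^1(-,M)\to 0$ is exact (Theorem~\ref{thm:integral-first-differential-H0}). It then applies the Duncan--Singh Brauer-realization theorem.

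Your lattice-level route can be repaired with the same $\Psi$:
\begin{itemize}
\item On $\mathbb Z[G/H_{KL}]$, $\Psi$ is $d_1$, with image $\ker(\operatorname{aug}_{n+1}-\operatorname{aug}_2)$.
\item $\operatorname{aug}_{n+1}-\operatorname{aug}_2$ takes the values $-2$ and $n+1$ on the two norm elements. These are coprime because $n$ is even, so $\Psi$ is surjective.
\item Solving $2s_1=(n+1)s_2$ shows $\ker\Psi=M\oplus\mathbb Z v$ with $v=\bigl(-\sum_i u_i\otimes(1+\sigma),\,n+1,\,2\bigr)$, which is fixed by $G$.
\item Dualizing gives $1\to R_{Q_0}\to R_{Q_1}\to T\times\mathbb G_m\to 1$, where $R_{Q_i}$ is the quasi-trivial torus with character lattice $Q_i$.
\item Hilbert~90 and Shapiro's lemma then identify $H^1(k,T)$ with the displayed kernel. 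The projections induce restrictions and the norm elements induce corestrictions.
\end{itemize}
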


The two coordinates in this theorem are Lamarche's Brauer invariants. Lamarche
constructs the injective map
\[
H^1(k,T)\longrightarrow \Br(K)\oplus \Br(L)
\]
\cite[Definition~2.3.1]{Lamarche}, but does not determine its image. The theorem
above gives the missing image description: it characterizes exactly which pairs of
Brauer classes arise from \(T\)-torsors. For \(n=2\), this specializes to the
Brauer-theoretic description in Blunk's classification of degree-six del Pezzo
surfaces \cite{BLUNK}.

\medskip
\begin{definition}[Admissible source fields]\label{def:admissible-source-fields}
Let \(\widetilde L\subseteq E\) be the Galois closure of \(L/k\).

\begin{enumerate}[label=\textup{(\roman*)}]
\item We say that \(F\) is \emph{\(2\)-admissible} if \(F\) lies in \(\widetilde L\),
there exist two distinct \(k\)-conjugates \(L'\) and \(L''\) of \(L\), and there
exists a maximal odd-degree subextension \(F^\sharp\) of
\(\widetilde L/L'L''\) containing \(FL'L''\), such that the smallest field \(R\)
with \(F\subseteq R\subseteq F^\sharp\) and \(F^\sharp/R\) Galois satisfies
\(L'\subseteq R\).

\item For an odd prime \(p\mid(n+1)\), we say that \(F\) is \emph{\(p\)-admissible}
if \(K\subseteq F\) and \(p\nmid [F:K]\).
\end{enumerate}
\end{definition}

\begin{theorem}[Field-theoretic degree-zero criterion; see Corollary~\ref{cor:field-theoretic-degree-zero}]
There exists a relative Brauer presentation
of \(H^1(k,T)\) with source fields
$F_1,\dots,F_r$
if and only if:
\begin{enumerate}[label=\textup{(\roman*)}]
\item at least one \(F_i\) is \(2\)-admissible;

\item for every odd prime \(p\mid(n+1)\), at least one \(F_i\) is
\(p\)-admissible.
\end{enumerate}
\end{theorem}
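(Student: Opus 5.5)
We will work in the framework of cohomological Mackey functors over \(G=\operatorname{Gal}(E/k)\), following \cite{retforms}. The first step is to recall the criterion established there. A relative Brauer presentation with source fields \(F_i=E^{H_i}\) exists if and only if the lattice \(M\) admits a coflasque-type resolution
\[
0\to M\to \bigoplus_i \mathbb Z[G/H_i]\to \bigoplus_j \mathbb Z[G/K_j],
\]
exact in degree-zero Tate cohomology. Equivalently, the induced map of cohomological Mackey functors \(\bigoplus_i \widehat H^0(-,\mathbb Z[G/H_i])\to \widehat H^1(-,M)\) must be surjective on every subgroup. This is the ``degree-zero criterion'' whose field-theoretic form is the statement. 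We will reduce the existence question to the local condition that, for every subgroup \(P\le G\), the classes coming from the \(H_i\) generate \(H^1(P,M)\), or dually \(\widehat H^{-1}\).

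Since cohomological Mackey functors split into \(p\)-primary parts and are detected on Sylow subgroups, it suffices to check surjectivity prime by prime on \(p\)-subgroups. For \(p\nmid 2(n+1)\), \(M\) is cohomologically trivial, so there is nothing to check. The plan is to compute \(H^1(P,M)\) for \(p\)-subgroups \(P\) from the sequence \(0\to M\to \mathbb Z[S_{n+1}/S_n]\otimes\chi\to \mathbb Z\otimes\chi\to 0\), where \(\chi\) is the sign character of \(S_2\).

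For an odd prime \(p\), the \(S_2\)-factor acts trivially on the \(p\)-part after taking \(S_2\)-invariants and twisting. The relevant cohomology comes from \(\mathbb Z/(n+1)\) twisted by the \(-1\) action. Only subgroups \(P\) contained in \(S_{n+1}\times 1\) contribute, and a permutation module \(\mathbb Z[G/H]\) hits the nonzero \(p\)-part exactly when \(H\le S_{n+1}\times1\), i.e.\ \(K\subseteq F\), and \(H\) contains a Sylow \(p\)-subgroup of \(S_{n+1}\), i.e.\ \(p\nmid[F:K]\). This recovers \(p\)-admissibility.

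The prime \(2\) is the main obstacle. Here \(H^1(P,M)\) for \(2\)-subgroups \(P\) of \(S_{n+1}\times S_2\) is nonzero for many \(P\), and the generating condition is genuinely subgroup-lattice-theoretic. We will compute which \(H\) satisfy \(\operatorname{Cor}\) surjectivity onto the relevant \(\widehat H^{-1}(P,M)\), using the Mackey double coset formula and the fact that \(n\) is even, so \(n+1\) is odd. We expect the condition to become: some \(2\)-subgroup of \(H\) maps onto the critical classes supported on the stabilizer of two points, \(S_{n-1}\times S_2\), twisted. This is where the two conjugates \(L',L''\) enter. Translating group conditions back to fields, \(F^\sharp\) corresponds to a Sylow \(2\)-subgroup of the two-point stabilizer containing \(H\)'s \(2\)-part, and \(R\) corresponds to its normal closure inside that. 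We would first verify the group statement on the explicit Sylow \(2\)-subgroup (a product of wreath products of \(C_2\) times \(S_2\)) and then check that the Galois-theoretic definition of \(2\)-admissibility is exactly the image of that condition under the Galois correspondence.

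Finally, we combine the primes. Sufficiency follows by taking the direct sum of the admissible permutation modules together with \(\mathbb Z[G/S_{n+1}]\oplus\mathbb Z[G/(S_n\times S_2)]\)-type targets to complete the resolution. Necessity follows because failure at any prime leaves a nonzero cokernel at a Sylow subgroup.
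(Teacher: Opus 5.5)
There is a genuine gap at the prime $2$, which is where the substance of the statement lies. You only say what you \emph{expect} the condition to be. The expected form, namely some $2$-subgroup of $H$ hitting classes supported on the two-point stabilizer, to be checked on an explicit Sylow $2$-subgroup, cannot be correct.

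A morphism $FP_{\mathbb Z[G/H]}\to H^1(-,M)$ amounts to one class $x\in H^1(H,M)$. The $2$-part of $H^1(H,M)$ is the group of $H$-stable classes on a Sylow $2$-subgroup of $H$. So the answer depends on fusion in $H$, i.e.\ on odd-order elements normalizing $2$-subgroups. Already for $n=2$, the subgroups $1\times S_2$ and $A_3\times S_2$ have the same Sylow $2$-subgroup. But $H^1(1\times S_2,M_2)\cong M_2/2M_2\cong D$ is the $2$-dimensional simple $\mathbb F_2S_3$-module, on which a $3$-cycle fixes no nonzero vector. Hence $H^1(A_3\times S_2,M_2)=0$, so the discriminant field $\widetilde L^{A_3}$ can never carry the $2$-primary part, whereas $\widetilde L$ or $L$ can.

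The missing idea is the paper's. $FP_{A_2}$ is the $2$-local projective cover, so one needs $A_2\mid\mathbb Z_2[G/H_i]$ for some $i$. $A_2$ has vertex $P_1\times S_2$ with Green correspondent $D$. $D$ occurs in the Brauer quotient $\bigoplus_g\mathbb F_2[\overline N/\overline J_g]$ of $\mathbb F_2[S_{n+1}/J]$ at $P_1$ exactly when the image of some $N\cap{}^gJ$ in $\operatorname{Sym}(\Omega^{P_1})\cong S_3$ fixes a point. This normalizer condition is why the field $R$ in the definition is $\widetilde L^{N_J(P)}$; it is not a ``normal closure''.

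Several other steps also need repair.
\begin{enumerate}
\item Your necessity argument passes from ``the $H_i$ jointly generate'' to ``at least one $F_i$ is admissible''. That requires each $p$-part of $H^1(-,M)$ to have an indecomposable projective cover. The paper gets this from $\operatorname{End}(A_2)\cong\mathbb Z_2$, indecomposability of $\mathbb Z_p^-$, and Krull--Schmidt.
\item Your lattice-level formulation is backwards. From $M\hookrightarrow\bigoplus_i\mathbb Z[G/H_i]$ there is no induced map $H^0(-,\bigoplus_i\mathbb Z[G/H_i])\to H^1(-,M)$. In Lamarche's sequence the term adjacent to $M$ is $\mathbb Z[G/H_{KL}]$, which yields the \emph{target} $KL$. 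Meanwhile $\mathbb Z[S_2]\oplus\mathbb Z[S_{n+1}/S_n]$ yields the sources $K,L$, not targets as in your sufficiency sentence. The right criterion is the Mackey-level one you state second: an epimorphism $\bigoplus_i FP_{\mathbb Z[G/H_i]}\to H^1(-,M)$.
\item For $p\nmid 2(n+1)$ the lattice $M_p$ need not be cohomologically trivial. Only $H^1(-,M_p)=0$ holds, which suffices.
\end{enumerate}

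Your odd-prime conclusion is correct and easy to make rigorous. For $U=U_0\times 1$ one has $H^1(U,M_p)\cong\mathbb Z_p/g_U\mathbb Z_p$, with $g_U$ the gcd of the $U_0$-orbit sizes on $\Omega$. This group is $0$ for $U\not\le S_{n+1}\times 1$. Restriction is induced by the identity of $\mathbb Z_p$, and corestriction is multiplication by the index.
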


This theorem determines exactly which intermediate fields of \(E/k\) can appear
on the source side of a relative Brauer presentation of \(H^1(k,T)\). The fields
on the target side, and the restriction and corestriction map between the two
sides, depend on the particular presentation chosen.

The paper is organized as follows. We first recall the toric description of the
generalized del Pezzo variety \(V_n\), the Losev-Manin space, and the common
character lattice \(M\). We then analyze the relevant permutation lattices
locally at each prime. Next, we compute a completed minimal projective
presentation of the cohomological Mackey functor \(H^1(-,M).\) Using the framework of \cite{retforms}, we translate this presentation into the Brauer parametrization of \(H^1(k,T)\). Finally, we determine which degree-zero permutation terms can occur and translate that criterion into the field-theoretic source-field criterion stated above.

\emph{Acknowledgements.}
I am grateful to my advisor, Alexander Duncan, for many helpful discussions,
suggestions, and encouragement throughout this project. I also thank Asher Auel
and Brendan Hassett for raising questions that motivated this work.

\section{Generalized del Pezzo varieties and the Losev--Manin space}
\label{sec:delpezzo-losev-manin}

Throughout this section \(n\) is even. Set \(\Omega:=\{0,\dots,n\}\) and
\(G:=S_{n+1}\times S_2\). The group \(S_{n+1}\) acts on \(\Omega\), and we
identify \(S_n\le S_{n+1}\) with the stabilizer of the point \(n\).

This section recalls the two split toric varieties that motivate the paper: the
generalized del Pezzo variety \(V_n\) and the Losev--Manin space \(L_{n+1}\).
They are different toric compactifications, but the tori acting on them have the
same \(G\)-lattice as character lattice. 

\subsection{Generalized del Pezzo varieties}

Centrally symmetric smooth toric Fano varieties were classified by
Voskresenskii and Klyachko \cite{Voskresenskii_Klyachko_1985}. In particular,
such varieties are products of projective lines and generalized del Pezzo
varieties \(V_n\) of even dimension \(n\).

We use the following description of \(V_n\), following
Ballard--Duncan--McFaddin \cite{BallardDuncanMcFaddin2022} and Lamarche
\cite[Definition~2.2.1]{Lamarche}. Let \(N\simeq \mathbb Z^n\) have standard
basis \(e_1,\dots,e_n\), and set \(e_0:=-e_1-\cdots-e_n\). The split
generalized del Pezzo variety \(V_n\) is the toric variety whose rays are
generated by \(e_0,e_1,\dots,e_n\) and their opposites
\(\overline e_i:=-e_i\) for \(0\le i\le n\). Its maximal cones are generated by
sets \(\{e_i\}_{i\in A}\cup\{\overline e_i\}_{i\in B}\), where
\(A,B\subset\Omega\) are disjoint and \(|A|=|B|=n/2\).

The fan of \(V_n\) has a natural \(G\)-action: \(S_{n+1}\) permutes the indices
in \(\Omega=\{0,\dots,n\}\), and the nontrivial element of \(S_2\) exchanges
\(e_i\) and \(\overline e_i\). Lamarche shows that the automorphism group fits
into a split exact sequence
\[
1\longrightarrow T\longrightarrow \operatorname{Aut}(V_n)
\longrightarrow S_{n+1}\times S_2\longrightarrow 1,
\]
where \(T\) is the dense torus of \(V_n\)
\cite[Proposition~2.3.1]{Lamarche}. Thus, for \(V_n\), the toric automorphism
group is the full automorphism group.

\subsection{The Losev-Manin space}

The Losev-Manin space gives another toric compactification with the same dense
torus. Recall that
\[
M=
\left\{
(a_0,\dots,a_n)\in \mathbb Z^{n+1}
\;\middle|\;
a_0+\cdots+a_n=0
\right\},
\]
and put \(N:=\operatorname{Hom}(M,\mathbb Z)\). For every nonempty proper subset
\(I\subsetneq \Omega\), define \(v_I\in N\) by
\[
v_I(a_0,\dots,a_n)=\sum_{i\in I}a_i.
\]
Let \(\Sigma_n\) be the fan in \(N_\mathbb R\) whose cones are generated by
chains of such subsets:
\[
\sigma_{I_\bullet}
=
\mathbb R_{\ge 0}v_{I_1}
+\cdots+
\mathbb R_{\ge 0}v_{I_r},
\qquad
\emptyset\subsetneq I_1\subsetneq\cdots\subsetneq I_r\subsetneq \Omega.
\]
Equivalently, the maximal cones are indexed by permutations
\(\pi\in S_{n+1}\) and are generated by
\[
v_{\{\pi(0)\}},\;
v_{\{\pi(0),\pi(1)\}},\;
\dots,\;
v_{\{\pi(0),\dots,\pi(n-1)\}}.
\]
This is the Weyl, or permutohedral, fan of type \(A_n\).\\
The associated toric variety \(X(\Sigma_n)\) is the split Losev--Manin space \(L_{n+1}\). We use the standard toric
realization of the Losev-Manin moduli space introduced by Losev and Manin
\cite[\S 2]{LosevManin2000}; see also Batyrev-Blume for the realization via
Weyl fans of root systems \cite{Batyrev_Blume_2011}. The same space appears in
Hassett's theory of weighted pointed stable curves as the moduli space with two
heavy marked points and \(n+1\) light marked points
\cite[\S 6.4]{HassettWeighted}.

The dense torus of \(L_{n+1}\) has character lattice \(M\). The fan
\(\Sigma_n\) is preserved by the \(G=S_{n+1}\times S_2\)-action already fixed on
\(M\): the group \(S_{n+1}\) sends \(v_I\) to \(v_{gI}\), and the nontrivial
element of \(S_2\) sends \(v_I\) to \(v_{\Omega\setminus I}=-v_I\). For
\(n+1\ge 3\), this gives \(\operatorname{Aut}(\Sigma_n)\cong S_{n+1}\times S_2.\)
Thus \(V_n\) and \(L_{n+1}\) are different toric compactifications whose dense
tori have the same \(G\)-lattice as character lattice.

\subsection{Toric forms and torsors}

Let \(X(\Sigma)\) be a split toric variety with split torus \(T_0\). A toric
form has two pieces of descent data: a finite Galois action on the fan
\(\Sigma\), and a torsor under the corresponding twisted torus.

Equivalently, toric forms are governed by the non-abelian cohomology set
\(H^1(k,T_0\rtimes \operatorname{Aut}(\Sigma))\). Projection to
\(\operatorname{Aut}(\Sigma)\) records the fan action. If
\([\rho]\in H^1(k,\operatorname{Aut}(\Sigma))\) is fixed, and \(T_\rho\) is the
corresponding twisted torus, then the fibre over \(\rho\) is
\[
H^1(k,T_\rho)
\big/
H^0\!\bigl(k,{}^\rho\operatorname{Aut}(\Sigma)\bigr).
\]
Thus \(H^1(k,T_\rho)\) is the torsor part of the classification, while
\(H^0(k,{}^\rho\operatorname{Aut}(\Sigma))\) accounts for residual automorphisms
of the twisted fan.

If the torus action is part of the structure, then \(H^1(k,T_\rho)\) itself is
the relevant torsor group. This is the sense in which \(H^1(k,T)\) classifies
forms as \(T\)-toric varieties. This viewpoint is used in Blunk's classification
of degree-six del Pezzo surfaces up to isomorphism preserving the torus action
\cite[Theorem~2.4]{BLUNK}; see also Duncan's framework for twisted forms of
toric varieties \cite{Duncan2016}.

\begin{remark}[Abstract forms]
For the two compactifications considered here, the full automorphism group is
the toric automorphism group. For generalized del Pezzo varieties, Lamarche
proves that there is a split exact sequence
\[
1\longrightarrow T\longrightarrow \operatorname{Aut}(V_n)
\longrightarrow S_{n+1}\times S_2\longrightarrow 1
\]
\cite[Proposition~2.3.1]{Lamarche}. For the Losev--Manin space
\(L_{n+1}=X(\Sigma_n)\), one likewise has
\[
\operatorname{Aut}(L_{n+1})
\cong
T\rtimes \operatorname{Aut}(\Sigma_n)
\cong
T\rtimes(S_{n+1}\times S_2)
\]
for \(n+1\ge 3\). Hence, for both \(V_n\) and \(L_{n+1}\), the classification of
abstract forms agrees with the classification of toric forms. The difference
between the two varieties is the choice of toric compactification, not the
underlying torus or its finite automorphism group.
\end{remark}

\subsection{Standing notation}

Let \(\sigma\) denote the nontrivial element of \(S_2\). We write
\[
\mathbb Z[S_{n+1}/S_n]=\bigoplus_{i=0}^n \mathbb Z u_i,
\]
where \(u_i\) corresponds to the point \(i\in\Omega\). This is regarded as a
\(G\)-lattice through the projection \(G\to S_{n+1}\), so the \(S_2\)-factor
acts trivially. Similarly, \(\mathbb Z[S_2]=\mathbb Z\{1,\sigma\}\) is regarded
as a \(G\)-lattice through the projection \(G\to S_2\), so the
\(S_{n+1}\)-factor acts trivially. With \(S_n\) embedded in \(G\) as
\(S_n\times 1\), we have a natural \(G\)-lattice isomorphism
\[
\mathbb Z[G/(S_n\times 1)]
\cong
\mathbb Z[S_{n+1}/S_n]\otimes_{\mathbb Z}\mathbb Z[S_2].
\]

Let \(\mathbb Z^{-}\) denote the rank-one \(G\)-lattice on which \(S_{n+1}\)
acts trivially and \(\sigma\) acts by \(-1\). 

For a finite \(G\)-set \(X\), let \(\operatorname{aug}_X:\mathbb Z[X]\to
\mathbb Z\) denote the augmentation map. In particular,
\(\operatorname{aug}_{n+1}:\mathbb Z[S_{n+1}/S_n]\to\mathbb Z\) is given by
\[
\sum_{i=0}^n a_i u_i\longmapsto \sum_{i=0}^n a_i,
\]
and \(\operatorname{aug}_2:\mathbb Z[S_2]\to\mathbb Z\) is given by
\[
a\cdot 1+b\cdot \sigma\longmapsto a+b.
\]
Thus
\[
\ker(\operatorname{aug}_2)\cong \mathbb Z^{-},
\qquad
1-\sigma \longleftrightarrow 1.
\]
For a commutative ring \(R\), set \(R^{-}:=\mathbb Z^{-}\otimes_{\mathbb Z}R\).

\subsection{The integral permutation sequence}

Define
\[
d_1:
\mathbb Z[S_{n+1}/S_n]\otimes_{\mathbb Z}\mathbb Z[S_2]
\longrightarrow
\mathbb Z[S_{n+1}/S_n]\oplus \mathbb Z[S_2]
\]
by
\[
d_1=
(\operatorname{id}\otimes \operatorname{aug}_2,\,
\operatorname{aug}_{n+1}\otimes \operatorname{id}),
\]
and define \(d_2:\mathbb Z[S_{n+1}/S_n]\oplus\mathbb Z[S_2]\to\mathbb Z\) by
\(d_2=\operatorname{aug}_{n+1}-\operatorname{aug}_2\).

The following sequence is the split case of Lamarche's permutation-lattice
presentation of the character lattice of the dense torus of \(V_n\)
\cite[Lemma~2.3.3]{Lamarche}.

\begin{prop}[Lamarche's integral permutation sequence]
\label{prop:M-description}
There is an exact sequence of \(G\)-lattices
\begin{equation}\label{eq:Lamarche}
0 \longrightarrow M \longrightarrow
\mathbb Z[S_{n+1}/S_n]\otimes_{\mathbb Z}\mathbb Z[S_2]
\xrightarrow{d_1}
\mathbb Z[S_{n+1}/S_n]\oplus \mathbb Z[S_2]
\xrightarrow{d_2}
\mathbb Z
\longrightarrow 0.
\end{equation}
\end{prop}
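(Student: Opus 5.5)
We have to prove exactness of the sequence in \eqref{eq:Lamarche}, where $M\to \mathbb Z[S_{n+1}/S_n]\otimes\mathbb Z[S_2]$ needs to be specified. The plan is to write elements of the middle-left term as pairs $(x,y)$ with $x=\sum a_iu_i\otimes 1$ and $y=\sum b_iu_i\otimes\sigma$, and to embed $M$ by
\[
(a_0,\dots,a_n)\longmapsto \sum_i a_i\,u_i\otimes(1-\sigma).
\]
This map is $G$-equivariant. The group $S_{n+1}$ permutes the $u_i$, matching its permutation of the coordinates. The element $\sigma$ sends $u_i\otimes(1-\sigma)$ to $u_i\otimes(\sigma-1)$, which is multiplication by $-1$, as required. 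The map is clearly injective. Equivalently, the inclusion is $M\subset \mathbb Z[S_{n+1}/S_n]\otimes\mathbb Z^{-}\subset \mathbb Z[S_{n+1}/S_n]\otimes\mathbb Z[S_2]$, using the identification $\ker(\operatorname{aug}_2)\cong\mathbb Z^-$ from the standing notation.

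Next we check $d_2\circ d_1=0$ and $d_1|_M=0$ directly on the basis. For the first, $d_1(u_i\otimes\tau)=(u_i,\tau)$ for $\tau\in\{1,\sigma\}$, and $d_2(u_i,\tau)=1-1=0$. For the second, $d_1(\sum a_iu_i\otimes(1-\sigma))=(0,\,(\sum a_i)(1-\sigma))$, which vanishes exactly when $\sum a_i=0$. Surjectivity of $d_2$ is immediate, since $d_2(u_0,0)=1$.

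The substantive part is exactness in the two middle spots. The cleanest route is a rank count combined with a saturation argument, with the kernel of $d_1$ computed directly:
\begin{enumerate}
\item[(a)] Write a general element as $\sum_i (a_iu_i\otimes1+b_iu_i\otimes\sigma)$. Its image under $d_1$ is $\bigl(\sum_i(a_i+b_i)u_i,\ (\sum a_i)\cdot 1+(\sum b_i)\sigma\bigr)$. This vanishes iff $b_i=-a_i$ for all $i$ and $\sum a_i=0$, which is precisely the image of $M$. So exactness at the second term is exact over $\mathbb Z$, not just rationally.
\item[(b)] Exactness at $\mathbb Z[S_{n+1}/S_n]\oplus\mathbb Z[S_2]$ can be handled the same way. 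Take $(\sum c_iu_i,\ e\cdot1+f\sigma)$ with $\sum c_i=e+f$. We produce an explicit preimage: $\sum_i c_iu_i\otimes 1$ has image $(\sum c_iu_i,\,(\sum c_i)\cdot 1)$. The difference is $(0,\ (e-\sum c_i)\cdot 1+f\sigma)=(0,\,f(\sigma-1))$. This equals $d_1(u_0\otimes(\sigma-1)f)$ plus a correction $(u_0f-u_0f)$. More carefully, $d_1(f\,u_0\otimes\sigma - f\,u_0\otimes1)=(0,f(\sigma-1))$, so the element lies in the image.
\end{enumerate}

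No step is a genuine obstacle; the only care needed is the integral (not merely rational) exactness in (b), which the explicit preimage settles. A rank check ($2(n+1)=n+(n+3)-1$) serves as a consistency test.
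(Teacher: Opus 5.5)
Your proof is correct and follows essentially the same route as the paper: you embed $M$ as $\ker(\operatorname{aug}_{n+1})\otimes\ker(\operatorname{aug}_2)$ and compute $\ker d_1$ directly (in coordinates, where the paper uses a four-block splitting of the tensor product). You also exhibit an explicit preimage $\sum_i c_iu_i\otimes 1+f\,u_0\otimes(\sigma-1)$, where the paper uses $(a-mu_n)\otimes 1+u_n\otimes b$; the announced rank-count/saturation argument is never needed, and the muddled ``correction $(u_0f-u_0f)$'' sentence in (b) should simply be replaced by your ``more carefully'' computation.
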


\begin{proof}
Let \(A:=\mathbb Z[S_{n+1}/S_n]\), \(B:=\mathbb Z[S_2]\),
\(A^0:=\ker(\operatorname{aug}_{n+1})\), and
\(B^0:=\ker(\operatorname{aug}_2)\). Thus \(M=A^0\otimes_{\mathbb Z}B^0\).

We first identify the kernel of \(d_1\). Choose the basis elements \(u_n\in A\) and \(1\in B\). Since
\(\operatorname{aug}_{n+1}(u_n)=1\) and \(\operatorname{aug}_2(1)=1\), the
augmentation sequences split as abelian groups:
\[
A=A^0\oplus \mathbb Z u_n,
\qquad
B=B^0\oplus \mathbb Z\cdot 1.
\] Hence
\[
A\otimes B
=
(A^0\otimes B^0)
\oplus
(A^0\otimes \mathbb Z\cdot 1)
\oplus
(\mathbb Z u_n\otimes B^0)
\oplus
(\mathbb Z u_n\otimes \mathbb Z\cdot 1).
\]
If \(x\in A\otimes B\), write
\(x=x_{00}+x_{10}\otimes 1+u_n\otimes x_{01}+c\,u_n\otimes 1\), with
\(x_{00}\in A^0\otimes B^0\), \(x_{10}\in A^0\), \(x_{01}\in B^0\), and
\(c\in\mathbb Z\). Then \(d_1(x)=(x_{10}+c u_n,\;x_{01}+c\cdot 1)\).
Thus \(d_1(x)=0\) implies \(c=0\), \(x_{10}=0\), and \(x_{01}=0\). Therefore
\(\ker(d_1)=A^0\otimes B^0=M\).

Next, \(d_2\circ d_1=0\) by definition. Conversely, suppose \((a,b)\in A\oplus
B\) satisfies \(d_2(a,b)=0\). Then
\(\operatorname{aug}_{n+1}(a)=\operatorname{aug}_2(b)\). Let this common integer
be \(m\). Then \(a-mu_n\in A^0\) and \(b-m\cdot 1\in B^0\). The element
\((a-mu_n)\otimes 1+u_n\otimes b\in A\otimes B\) maps under \(d_1\) to
\((a,b)\). Hence \(\ker(d_2)=\operatorname{im}(d_1)\). Finally, \(d_2\) is
surjective, since \(d_2(u_n,0)=1\). This proves exactness.
\end{proof}

By Lamarche's description of the torus acting on \(V_n\), the lattice \(M\) is
the character lattice \(X^*(T)\) of the dense torus
\cite[Lemma~2.3.3]{Lamarche}. \\

\section{Local structure of the permutation lattices}\label{gendell}
If \(R\) is a commutative ring, we write
\(M_R:=M\otimes_{\mathbb Z}R\) and for a prime \(p\), we write
\(M_p:=M\otimes_{\mathbb Z}\mathbb Z_p\).
set
\[
U_p:=\mathbb Z_p[S_{n+1}/S_n],
\qquad
A_p:=\ker\!\left(
\operatorname{aug}_{n+1}:U_p\to \mathbb Z_p
\right).
\]

\begin{lemma}[Farahat]
\label{lem:Farahat-natural}
Let \(p\) be a prime.
Then \(A_p\) is indecomposable and
\[
\operatorname{End}_{\mathbb Z_pS_{n+1}}(A_p)\cong \mathbb Z_p.
\]
Moreover, the augmentation sequence
\[
0\longrightarrow A_p
\longrightarrow U_p
\xrightarrow{\operatorname{aug}_{n+1}}
\mathbb Z_p
\longrightarrow 0
\]
splits if and only if \(p\nmid(n+1)\). If \(p\mid(n+1)\), then \(U_p\) is
indecomposable.
\end{lemma}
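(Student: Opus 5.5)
We compute \(\operatorname{End}_{\mathbb Z_pS_{n+1}}(A_p)\) directly; indecomposability then follows, and the splitting and indecomposability statements come from the standard norm/augmentation computation. Note \(n+1\ge 3\).

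\emph{Step 1: endomorphisms.} Write \(m=n+1\) and \(U_p\) with basis \(u_0,\dots,u_n\). Then \(A_p\) has \(\mathbb Z_p\)-basis \(u_i-u_n\), and \(A_p\otimes\mathbb Q_p\) is the standard irreducible representation of \(S_m\) over \(\mathbb Q_p\). This representation is absolutely irreducible, so \(\operatorname{End}_{\mathbb Q_pS_m}(A_p\otimes\mathbb Q_p)=\mathbb Q_p\). Since \(A_p\) is \(\mathbb Z_p\)-free, \(\operatorname{End}_{\mathbb Z_pS_m}(A_p)\) embeds in \(\operatorname{End}_{\mathbb Q_pS_m}(A_p\otimes\mathbb Q_p)=\mathbb Q_p\). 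Its image contains \(\mathbb Z_p\). It is also contained in \(\mathbb Z_p\): a scalar \(\lambda\) preserves the lattice \(A_p\) only if \(\lambda\in\mathbb Z_p\).

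Hence \(\operatorname{End}(A_p)\cong\mathbb Z_p\). Since \(\mathbb Z_p\) is local, it has no nontrivial idempotents, so \(A_p\) is indecomposable. (One could instead argue by hand: an equivariant endomorphism commutes with the point stabilizers, hence has the form \(u_i-u_n\mapsto a(u_i-u_n)+b\sum_j(u_j-u_n)\); invariance under the transposition \((0\,n)\) then forces \(b=0\). The rational argument is cleaner.)

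\emph{Step 2: splitting.} A splitting of \(\operatorname{aug}_{n+1}\) is an \(S_m\)-equivariant map \(\mathbb Z_p\to U_p\) sending \(1\) to an invariant element of augmentation \(1\). The invariants of \(U_p\) are \(\mathbb Z_p\cdot N\), where \(N=\sum_i u_i\), and \(\operatorname{aug}(cN)=cm\). So a splitting exists if and only if \(m\) is a unit in \(\mathbb Z_p\), that is, if and only if \(p\nmid m\).

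\emph{Step 3: indecomposability of \(U_p\) when \(p\mid m\).} This is the main step. I would show \(\operatorname{End}_{\mathbb Z_pS_m}(U_p)\) is local.

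By Mackey, the endomorphism ring of \(U_p=\mathbb Z_p[S_m/S_{m-1}]\) is free on the double coset operators: the identity \(I\) and \(J\), where \(J(u_i)=\sum_{j\ne i}u_j\). In terms of the all-ones map \(E\) (so \(E(u_i)=N\)), we have \(J=E-I\) and \(E^2=mE\). Thus \(\operatorname{End}(U_p)=\mathbb Z_p[E]/(E^2-mE)\).

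Since \(p\mid m\), \(E^2\equiv 0\pmod p\), so \(E\) lies in the Jacobson radical. Therefore \(\operatorname{End}(U_p)/(p,E)=\mathbb F_p\), and the ring is local. Hence \(U_p\) is indecomposable, since a nontrivial decomposition would give a nontrivial idempotent, which a local ring does not have.

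Alternatively, \(U_p=A_p\oplus C\) with \(A_p\) indecomposable would force \(C\cong\mathbb Z_p\) by Krull–Schmidt over the complete ring \(\mathbb Z_p\), contradicting Step 2. The direct computation above avoids needing that.
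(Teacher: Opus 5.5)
Your proof is correct, but it takes a different route from the paper, whose proof is only a citation of Farahat's results on the natural permutation lattice of $S_{n+1}$ over $\mathbb Z_p$.

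You prove everything directly:
\begin{enumerate}
\item Absolute irreducibility of the rational standard representation pins $\operatorname{End}_{\mathbb Z_pS_{n+1}}(A_p)$ down to the scalars $\mathbb Z_p$. This in fact holds for any full $\mathbb Z_p$-lattice in that representation, not just $A_p$.
\item The splitting criterion is the computation $U_p^{S_{n+1}}=\mathbb Z_pN$ with $\operatorname{aug}_{n+1}(N)=n+1$.
\item Indecomposability of $U_p$ comes from the Hecke algebra $\mathbb Z_p[E]/(E^2-(n+1)E)$. This ring is local exactly when $p\mid n+1$; otherwise $E/(n+1)$ is the idempotent that realizes the splitting of Step~2, so your Steps~2 and~3 fit together.
\end{enumerate}

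Your route gives a self-contained argument that visibly uses only the $2$-transitivity of $S_{n+1}$ on $\Omega$. It therefore applies verbatim to any doubly transitive permutation lattice, and the reader need not match the lemma against the cited statements. The citation gives brevity and access to Farahat's finer structural information, none of which is needed here.

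One caution about your closing aside. Assuming $U_p=A_p\oplus C$ only rules out decompositions in which the augmentation kernel itself is a summand, and in that case $C\cong U_p/A_p\cong\mathbb Z_p$ needs no Krull--Schmidt. To rule out an arbitrary decomposition $U_p=X\oplus Y$ this way, you would argue as follows. Rationally, one summand, say $X$, has rank one with trivial action, so $X=\mathbb Z_p\,cN$. The projection onto $X$ is then $d\cdot\operatorname{aug}_{n+1}$ with $dc(n+1)=1$, which forces $p\nmid n+1$. Since your Step~3 is the direct Hecke-algebra computation and does not rely on this aside, the proof stands.
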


\begin{proof}
This is Farahat's result on the natural representation of the symmetric group,
applied to the natural permutation lattice
\[
\mathbb Z_p[S_{n+1}/S_n].
\]
See \cite[Theorem~2.1, Lemma~4.7, Corollary~4.8, and
Theorem~4.12]{Farahat_1962}.
\end{proof}
\begin{lemma}\label{lem:odd-Up-and-S2}
Let \(p\) be an odd prime dividing \(n+1\).
Then \(U_p\) is indecomposable as a \(\mathbb Z_pS_{n+1}\)-lattice. Moreover,
\[
\mathbb Z_p[S_2]\cong \mathbb Z_p\oplus \mathbb Z_p^{-}.
\]
\end{lemma}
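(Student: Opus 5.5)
The lemma has two independent parts, and I will handle them separately.

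The first part is the indecomposability of \(U_p\) as a \(\mathbb Z_pS_{n+1}\)-lattice. This is the last clause of Lemma~\ref{lem:Farahat-natural}, applied with \(p\mid(n+1)\). So I will simply invoke that lemma. No oddness hypothesis is needed here.

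If one wanted a self-contained argument, I would argue by idempotents. Any endomorphism of the transitive permutation lattice \(U_p\) is determined by Mackey/orbit data. The endomorphism ring is spanned by the identity \(\mathrm{id}\) and the all-ones map \(J\), where \(J(u_i)=\sum_j u_j\). These satisfy \(J^2=(n+1)J\).

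Let \(e=a\,\mathrm{id}+bJ\) be an idempotent. Comparing coefficients in \(e^2=e\) gives \(a^2=a\) and \(2ab+(n+1)b^2=b\).
\begin{itemize}
\item[] If \(a=0\), then \((n+1)b^2=b\), so \(b=0\) or \(b=1/(n+1)\). The second value does not lie in \(\mathbb Z_p\) when \(p\mid(n+1)\).
\item[] If \(a=1\), then \(b(1+(n+1)b)=0\), so \(b=0\) or \(b=-1/(n+1)\). Again the second value is not in \(\mathbb Z_p\).
\end{itemize}
Hence the only idempotents are \(0\) and \(1\), and \(U_p\) is indecomposable. I expect no obstacle here, since the paper already cites Farahat for this.

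The second part is the splitting of \(\mathbb Z_p[S_2]\), and it is where the hypothesis that \(p\) is odd enters. Since \(p\) is odd, \(2\) is a unit in \(\mathbb Z_p\). So
\[
e_+=\tfrac12(1+\sigma),\qquad e_-=\tfrac12(1-\sigma)
\]
are orthogonal idempotents in \(\mathbb Z_p[S_2]\), and they sum to \(1\).

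This gives a decomposition
\[
\mathbb Z_p[S_2]=e_+\mathbb Z_p[S_2]\oplus e_-\mathbb Z_p[S_2].
\]
The first summand is spanned by \(1+\sigma\), on which \(\sigma\) acts trivially, so it is \(\cong\mathbb Z_p\). The second is spanned by \(1-\sigma\), on which \(\sigma\) acts by \(-1\), so it is \(\cong\mathbb Z_p^-\). Both are free of rank one over \(\mathbb Z_p\).

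Equivalently, the augmentation sequence \(0\to\mathbb Z_p^-\to\mathbb Z_p[S_2]\to\mathbb Z_p\to0\) splits via \(1\mapsto\tfrac12(1+\sigma)\). The isomorphism is \(G\)-equivariant because \(S_{n+1}\) acts trivially on all three terms.
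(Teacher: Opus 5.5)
Your proposal is correct and follows the paper's proof: indecomposability of $U_p$ is the $p\mid(n+1)$ clause of Farahat's lemma (Lemma~\ref{lem:Farahat-natural}), and the splitting of $\mathbb Z_p[S_2]$ comes from the orthogonal idempotents $\tfrac12(1\pm\sigma)$, exactly as in the paper. Your optional self-contained check is also valid and is an elementary substitute for the Farahat citation: the endomorphism ring is $\mathbb Z_p\,\mathrm{id}\oplus\mathbb Z_p J$ with $J^2=(n+1)J$, and its only idempotents are $0$ and $\mathrm{id}$ because $1/(n+1)\notin\mathbb Z_p$.
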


\begin{proof}
The first assertion follows from Lemma~\ref{lem:Farahat-natural}. Since \(p\mid(n+1)\), the natural permutation lattice
\[
U_p=\mathbb Z_p[S_{n+1}/S_n]
\]
is indecomposable.

For the second assertion, since \(p\) is odd, \(2\in\mathbb Z_p^\times\).
Thus the idempotents
\[
\frac{1+\sigma}{2},
\qquad
\frac{1-\sigma}{2}
\]
split \(\mathbb Z_p[S_2]\) into its trivial and sign summands:
\[
\mathbb Z_p[S_2]\cong \mathbb Z_p\oplus \mathbb Z_p^{-}.
\]
\end{proof}
\begin{prop}[Odd-primary summand]\label{prop:odd-short-exact-sequence}
Let \(p\) be an odd prime dividing \(n+1\).
Then the \(p\)-localization of \eqref{eq:Lamarche} contains, as a direct
summand, the short exact sequence
\[
0\longrightarrow M_p
\longrightarrow U_p\otimes_{\mathbb Z_p}\mathbb Z_p^-
\xrightarrow{\operatorname{aug}_{n+1}\otimes 1}
\mathbb Z_p^-
\longrightarrow 0.
\]
\end{prop}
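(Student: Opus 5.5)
Since \(p\) is odd, I will use the idempotents \(e_\pm=(1\pm\sigma)/2\) from Lemma~\ref{lem:odd-Up-and-S2}. They are central in \(\mathbb Z_p G\), because \(\sigma\) is central in \(G=S_{n+1}\times S_2\). Hence every \(\mathbb Z_pG\)-lattice \(X\) splits canonically as \(X=e_+X\oplus e_-X\), and every \(G\)-equivariant map respects this splitting. The localized sequence \eqref{eq:Lamarche}\(\otimes\mathbb Z_p\) is an exact sequence of \(\mathbb Z_pG\)-lattices, and exactness is preserved by the exact functors \(X\mapsto e_\pm X\). So the localized sequence is the direct sum of its \(e_+\)-part and its \(e_-\)-part, and it suffices to identify the \(e_-\)-part with the displayed short exact sequence.

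First compute the terms. From Lemma~\ref{lem:odd-Up-and-S2}, \(\mathbb Z_p[S_2]=\mathbb Z_p\oplus\mathbb Z_p^-\). This gives
\[
e_-\bigl(U_p\otimes\mathbb Z_p[S_2]\bigr)=U_p\otimes\mathbb Z_p^-,\qquad e_-\bigl(U_p\oplus\mathbb Z_p[S_2]\bigr)=\mathbb Z_p^-,\qquad e_-\mathbb Z_p=0.
\]
The middle identity holds because \(\sigma\) acts trivially on \(U_p\). For the kernel, the proof of Proposition~\ref{prop:M-description} gives \(M_p=A_p\otimes B^0_p\). Since \(\sigma\) acts by \(-1\) on \(B^0\), we get \(e_-M_p=M_p\) and \(e_+M_p=0\).

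Next track the maps. On \(U_p\otimes\mathbb Z_p^-\), the map \(\operatorname{id}\otimes\operatorname{aug}_2\) vanishes, since \(\operatorname{aug}_2(1-\sigma)=0\). Meanwhile \(\operatorname{aug}_{n+1}\otimes\operatorname{id}\) lands in \(e_-\mathbb Z_p[S_2]=\mathbb Z_p(1-\sigma)\cong\mathbb Z_p^-\). So the \(e_-\)-part of \(d_1\) is \(\operatorname{aug}_{n+1}\otimes 1\). The \(e_-\)-part of \(d_2\) is the zero map to \(0\). The \(e_-\)-part of the sequence is therefore
\[
0\to M_p\to U_p\otimes\mathbb Z_p^-\xrightarrow{\operatorname{aug}_{n+1}\otimes1}\mathbb Z_p^-\to 0,
\]
and it is exact as a summand of an exact sequence. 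The complementary \(e_+\)-summand is the exact complex \(0\to 0\to U_p\to U_p\oplus\mathbb Z_p\to\mathbb Z_p\to 0\).

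The only subtle point is showing that this decomposition is a direct-sum decomposition of complexes, not merely of the individual terms. This is exactly where centrality of \(\sigma\) and equivariance of \(d_1,d_2\) are used. The hypothesis \(p\mid(n+1)\) is not needed for the splitting; it only matters later, for the indecomposability of \(U_p\).
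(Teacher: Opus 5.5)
Your proof is correct and follows the same route as the paper. You use that $2$ is a unit to split $\mathbb Z_p[S_2]\cong\mathbb Z_p\oplus\mathbb Z_p^-$, take the sign part of the localized sequence, and identify its kernel with $M_p=A_p\otimes B^0_p$ via Proposition~\ref{prop:M-description}. Your central idempotents $e_\pm=(1\pm\sigma)/2$ also make explicit what the paper leaves implicit: the splitting is one of complexes, compatible with $d_1$ and $d_2$.
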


\begin{proof}
After tensoring \eqref{eq:Lamarche} with \(\mathbb Z_p\), we use the splitting
\[
\mathbb Z_p[S_2]\cong \mathbb Z_p\oplus \mathbb Z_p^-
\]
from Lemma~\ref{lem:odd-Up-and-S2}. Taking the sign summand gives the map
\[
U_p\otimes_{\mathbb Z_p}\mathbb Z_p^-
\xrightarrow{\operatorname{aug}_{n+1}\otimes 1}
\mathbb Z_p^-.
\]
Its kernel is
\[
\ker(\operatorname{aug}_{n+1})
\otimes_{\mathbb Z_p}
\mathbb Z_p^-.
\]
By Proposition~\ref{prop:M-description}, this kernel is precisely \(M_p\).
Hence the displayed short exact sequence is a direct summand of the
\(p\)-localization of \eqref{eq:Lamarche}.
\end{proof}
Recall from the above 
\[
A_2:=\ker\!\left(
\operatorname{aug}_{n+1}:
\mathbb Z_2[S_{n+1}/S_n]\longrightarrow \mathbb Z_2
\right).
\]
\begin{lemma}[The prime \(2\)]\label{lem:two-local-lattices}

Then \(A_2\) is indecomposable of rank \(n\), and
\[
\mathbb Z_2[S_{n+1}/S_n]
\cong
\mathbb Z_2\oplus A_2.
\]
Moreover, \(\mathbb Z_2[S_2]\) is indecomposable, and
\[
A_2\otimes_{\mathbb Z_2}\mathbb Z_2[S_2]
\]
is indecomposable of rank \(2n\). Finally,
\[
\mathbb Z_2\bigl[(S_{n+1}\times S_2)/(S_n\times 1)\bigr]
\cong
\mathbb Z_2[S_2]\oplus
\bigl(A_2\otimes_{\mathbb Z_2}\mathbb Z_2[S_2]\bigr).
\]
\end{lemma}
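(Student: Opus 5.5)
The plan is to reduce everything to Lemma~\ref{lem:Farahat-natural} together with a computation of endomorphism rings, using throughout that \(n\) is even, so \(n+1\) is odd and \(2\nmid(n+1)\).

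\emph{The first two assertions.} By Lemma~\ref{lem:Farahat-natural} with \(p=2\), the lattice \(A_2\) is indecomposable with \(\operatorname{End}_{\mathbb Z_2S_{n+1}}(A_2)\cong\mathbb Z_2\). Since \(2\nmid(n+1)\), the same lemma says the augmentation sequence splits. This gives
\[
\mathbb Z_2[S_{n+1}/S_n]\cong\mathbb Z_2\oplus A_2,
\]
and the rank of \(A_2\) is \((n+1)-1=n\).

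\emph{The lattice \(\mathbb Z_2[S_2]\).} I will show its endomorphism ring is local. That ring is \(\mathbb Z_2[C_2]\), acting by right multiplication. It is local because its unique maximal ideal is the kernel of
\[
\mathbb Z_2[C_2]\xrightarrow{\operatorname{aug}_2}\mathbb Z_2\to\mathbb F_2 .
\]
Equivalently, \(\mathbb F_2[C_2]\cong\mathbb F_2[x]/(x^2)\) is local, and idempotents lift from \(\mathbb F_2[C_2]\) to \(\mathbb Z_2[C_2]\). A lattice with local endomorphism ring has no nontrivial idempotents, so it is indecomposable.

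\emph{The main step: indecomposability of \(A_2\otimes\mathbb Z_2[S_2]\).} Here \(S_{n+1}\) acts on the first factor and \(S_2\) on the second. I will compute
\[
\operatorname{End}_{\mathbb Z_2G}\bigl(A_2\otimes\mathbb Z_2[S_2]\bigr)
=\bigl(\operatorname{End}_{\mathbb Z_2}(A_2)\otimes\operatorname{End}_{\mathbb Z_2}(\mathbb Z_2[S_2])\bigr)^{S_{n+1}\times S_2}.
\]
The point to check is that fixed points of the product group split factorwise. Put \(X=\operatorname{End}_{\mathbb Z_2}(A_2)\) and \(Y=\operatorname{End}_{\mathbb Z_2}(\mathbb Z_2[S_2])\); both are free \(\mathbb Z_2\)-modules. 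Since \(S_{n+1}\) acts trivially on \(Y\), choosing a basis of \(Y\) gives \((X\otimes Y)^{S_{n+1}}=X^{S_{n+1}}\otimes Y\). Now \(X^{S_{n+1}}\cong\mathbb Z_2\) is free and carries the trivial \(S_2\)-action, so taking \(S_2\)-invariants gives \(X^{S_{n+1}}\otimes Y^{S_2}\). Hence the endomorphism ring is
\[
\mathbb Z_2\otimes\mathbb Z_2[C_2]\cong\mathbb Z_2[C_2],
\]
which is local by the previous step. So the lattice is indecomposable, and its rank is \(n\cdot 2=2n\).

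\emph{The final decomposition.} Tensor the splitting \(\mathbb Z_2[S_{n+1}/S_n]\cong\mathbb Z_2\oplus A_2\) with \(\mathbb Z_2[S_2]\). Then the standing isomorphism
\[
\mathbb Z[G/(S_n\times1)]\cong\mathbb Z[S_{n+1}/S_n]\otimes\mathbb Z[S_2],
\]
after base change to \(\mathbb Z_2\), yields
\[
\mathbb Z_2[G/(S_n\times1)]\cong\mathbb Z_2[S_2]\oplus\bigl(A_2\otimes\mathbb Z_2[S_2]\bigr),
\]
as claimed.

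The only delicate point I expect is the factorwise splitting of invariants in the main step. It holds because all modules involved are free over \(\mathbb Z_2\) and each group factor acts trivially on the other factor.
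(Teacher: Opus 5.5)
Your proposal is correct and follows essentially the same route as the paper: you split the augmentation sequence using $2\nmid(n+1)$ and Farahat's lemma, and you get indecomposability of $\mathbb Z_2[S_2]$ from locality of the group ring of a $2$-group. You then compute $\operatorname{End}_{\mathbb Z_2G}(A_2\otimes\mathbb Z_2[S_2])\cong\mathbb Z_2\otimes\mathbb Z_2[S_2]$ and tensor the splitting, exactly as the paper does; the only difference is that you justify the factorwise splitting of invariants, which the paper simply asserts.
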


\begin{proof}
Since \(n+1\) is odd, the integer \(n+1\) is a unit in \(\mathbb Z_2\).
Hence the augmentation sequence
\[
0\longrightarrow A_2\longrightarrow
\mathbb Z_2[S_{n+1}/S_n]
\xrightarrow{\operatorname{aug}_{n+1}}
\mathbb Z_2
\longrightarrow 0
\]
splits. By Lemma~\ref{lem:Farahat-natural}, \(A_2\) is indecomposable and
\[
\operatorname{End}_{\mathbb Z_2S_{n+1}}(A_2)\cong \mathbb Z_2.
\]
Thus
\[
\mathbb Z_2[S_{n+1}/S_n]
\cong
\mathbb Z_2\cdot(u_0+\cdots+u_n)\oplus A_2 \cong \mathbb Z_2 \oplus A_2.
\]

The lattice \(\mathbb Z_2[S_2]\) is indecomposable because
\(\mathbb Z_2[S_2]\) is the group algebra of a \(2\)-group over the local ring
\(\mathbb Z_2\), hence is local.

Next,
\[
\operatorname{End}_{\mathbb Z_2G}
\left(A_2\otimes_{\mathbb Z_2}\mathbb Z_2[S_2]\right)
\cong
\operatorname{End}_{\mathbb Z_2S_{n+1}}(A_2)
\otimes_{\mathbb Z_2}
\operatorname{End}_{\mathbb Z_2S_2}(\mathbb Z_2[S_2]).
\]
Using
\[
\operatorname{End}_{\mathbb Z_2S_{n+1}}(A_2)\cong \mathbb Z_2
\]
and
\[
\operatorname{End}_{\mathbb Z_2S_2}(\mathbb Z_2[S_2])
\cong \mathbb Z_2[S_2],
\]
we get
\[
\operatorname{End}_{\mathbb Z_2G}
\left(A_2\otimes_{\mathbb Z_2}\mathbb Z_2[S_2]\right)
\cong
\mathbb Z_2[S_2].
\]
This ring is local, so
\[
A_2\otimes_{\mathbb Z_2}\mathbb Z_2[S_2]
\]
is indecomposable.

Finally,
\[
\mathbb Z_2\bigl[(S_{n+1}\times S_2)/(S_n\times 1)\bigr]
\cong
\mathbb Z_2[S_{n+1}/S_n]\otimes_{\mathbb Z_2}\mathbb Z_2[S_2].
\]
Using the decomposition of \(\mathbb Z_2[S_{n+1}/S_n]\), this becomes
\[
\mathbb Z_2[S_2]\oplus
\bigl(A_2\otimes_{\mathbb Z_2}\mathbb Z_2[S_2]\bigr).
\]
\end{proof}

\begin{prop}[The \(2\)-primary summand]\label{prop:two-short-exact-sequence}
The \(2\)-localization of \eqref{eq:Lamarche} contains, as a direct summand,
the short exact sequence
\[
0\longrightarrow M_2
\longrightarrow A_2\otimes_{\mathbb Z_2}\mathbb Z_2[S_2]
\xrightarrow{1_{A_2}\otimes \operatorname{aug}_2}
A_2
\longrightarrow 0.
\]
\end{prop}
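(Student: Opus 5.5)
Tensor the sequence \eqref{eq:Lamarche} with \(\mathbb Z_2\) and decompose each term by the splitting of Lemma~\ref{lem:two-local-lattices}. We use the \(G\)-stable decomposition
\[
\mathbb Z_2[S_{n+1}/S_n]=\mathbb Z_2 e\oplus A_2,\qquad e:=\tfrac{1}{n+1}(u_0+\cdots+u_n),
\]
which is available because \(n+1\in\mathbb Z_2^\times\) and is \(G\)-stable since \(e\) is \(S_{n+1}\)-fixed. Here \(\operatorname{aug}_{n+1}(e)=1\) and \(\operatorname{aug}_{n+1}(A_2)=0\). The three nonzero terms then become
\begin{align*}
U_2\otimes\mathbb Z_2[S_2]&=\bigl(e\otimes\mathbb Z_2[S_2]\bigr)\oplus\bigl(A_2\otimes\mathbb Z_2[S_2]\bigr),\\
U_2\oplus\mathbb Z_2[S_2]&=\mathbb Z_2 e\oplus A_2\oplus\mathbb Z_2[S_2],\\
\mathbb Z_2&=\mathbb Z_2 .
\end{align*}

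Next we compute \(d_1\) and \(d_2\) on these pieces. On \(A_2\otimes\mathbb Z_2[S_2]\), the first component \(\operatorname{id}\otimes\operatorname{aug}_2\) lands in \(A_2\), and the second component \(\operatorname{aug}_{n+1}\otimes\operatorname{id}\) vanishes. On \(e\otimes\mathbb Z_2[S_2]\cong\mathbb Z_2[S_2]\), the first component goes to \(\mathbb Z_2 e\) via \(\operatorname{aug}_2\), and the second is the identity onto \(\mathbb Z_2[S_2]\). The map \(d_2\) kills \(A_2\) and is nonzero on \(\mathbb Z_2 e\oplus\mathbb Z_2[S_2]\). So the localized complex is the direct sum of two complexes:
\[
0\to A_2\otimes\mathbb Z_2[S_2]\xrightarrow{1\otimes\operatorname{aug}_2}A_2\to 0\to 0
\]
and
\[
0\to\mathbb Z_2[S_2]\xrightarrow{(\operatorname{aug}_2,\operatorname{id})}\mathbb Z_2 e\oplus\mathbb Z_2[S_2]\xrightarrow{d_2}\mathbb Z_2\to 0 .
\]
All maps are \(G\)-equivariant, so this is a direct sum decomposition of complexes of \(\mathbb Z_2G\)-lattices.

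It remains to check that the first summand is short exact with kernel \(M_2\). Localization is exact, so the sum of the two complexes is exact, and hence each summand is exact. Alternatively, exactness of the first summand can be checked directly: \(1\otimes\operatorname{aug}_2\) is surjective because \(\operatorname{aug}_2\) is split surjective over \(\mathbb Z\), and its kernel is \(A_2\otimes\ker(\operatorname{aug}_2)\) because \(A_2\) is free over \(\mathbb Z_2\). The second complex is exact with zero homology: its first map is injective because of the identity component, and this forces all of \(M_2=\ker(d_1)\) into the first summand. Finally, \(A_2\otimes\ker(\operatorname{aug}_2)=(A^0\otimes B^0)\otimes\mathbb Z_2=M_2\), exactly as in the proof of Proposition~\ref{prop:M-description}.

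The only delicate point is that the splitting must be \(G\)-equivariant and compatible with the differentials. This is why we use the idempotent \(e\), which requires \(n+1\) to be odd, rather than the basis vector \(u_n\) used in Proposition~\ref{prop:M-description}, which is not \(G\)-stable.
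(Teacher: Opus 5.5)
Your proposal is correct and follows the same route as the paper: tensor \eqref{eq:Lamarche} with \(\mathbb Z_2\), split off \(A_2\) from \(\mathbb Z_2[S_{n+1}/S_n]\) using that \(n+1\) is a \(2\)-adic unit, and identify the kernel of \(1_{A_2}\otimes\operatorname{aug}_2\) with \(A_2\otimes\ker(\operatorname{aug}_2)\cong M_2\). You also check explicitly that the complementary complex \(\mathbb Z_2[S_2]\to\mathbb Z_2e\oplus\mathbb Z_2[S_2]\to\mathbb Z_2\) is exact with injective first map, so the splitting is one of complexes and all of \(M_2\) lands in the \(A_2\)-part; this fills in a step the paper leaves implicit.
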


\begin{proof}
Tensor \eqref{eq:Lamarche} with \(\mathbb Z_2\). By
Lemma~\ref{lem:two-local-lattices},
\[
\mathbb Z_2[S_{n+1}/S_n]
\cong
\mathbb Z_2\oplus A_2.
\]
Taking the \(A_2\)-summand of the localized sequence gives the map
\[
A_2\otimes_{\mathbb Z_2}\mathbb Z_2[S_2]
\xrightarrow{1_A\otimes \operatorname{aug}_2}
A_2.
\]
Its kernel is
\[
A_2\otimes_{\mathbb Z_2}\ker(\operatorname{aug}_2).
\]
Since
\[
A_2=\ker(\operatorname{aug}_{n+1})
\]
over \(\mathbb Z_2\), Proposition~\ref{prop:M-description} gives
\[
M_2\cong
A_2\otimes_{\mathbb Z_2}\ker(\operatorname{aug}_2).
\]
Therefore the \(A_2\)-summand of the localized sequence is precisely
\[
0\longrightarrow M_2
\longrightarrow A_2\otimes_{\mathbb Z_2}\mathbb Z_2[S_2]
\xrightarrow{1_{A_2}\otimes \operatorname{aug}_2}
A_2
\longrightarrow 0.
\]
\end{proof}

\section{Minimal projective presentations of the Mackey functor
\texorpdfstring{$H^1(-, M)$}{}}

We now pass from the \(G\)-lattice \(M\) to the cohomological Mackey functor
\[
F:=H^1(-,M).
\]
Thus, for every subgroup \(H\le G\),
\[
F(H)=H^1(H,M),
\]
with the usual restriction, corestriction, and conjugation maps. More generally,
for a coefficient ring \(R\), we write
\[
F_R:=H^1(-,M_R),
\qquad
F_R(H)=H^1(H,M_R).
\]

For an \(RG\)-lattice \(N\), we write
\[
FP_N:=H^0(-,N)
\]
for the associated fixed-point cohomological Mackey functor. In this section we
recall the necessary terminology on cohomological Mackey functors and compute
the beginning of the minimal projective presentation of \(F_R\), first
prime-locally and then over \(\widehat{\mathbb Z}\).

For background on Mackey functors, projective presentations in the category of
cohomological Mackey functors, and their relation to \(G\)-lattices, see
\cite{T-W,Thevenaz_Webb_1990,Webb_2000}.\\
By the Brauer-realization theorem of Duncan--Singh \cite{retforms},
permutation-projective presentations of \(F\) give relative Brauer presentations
of \(H^1(k,T)\). In this dictionary, degree-zero permutation
summands give source fields, while the first differential gives the
restriction, corestriction, and conjugation relations among the corresponding
Brauer classes. This is why the Mackey-functor presentation computed below is
the input for the Brauer parametrization in Section~\ref{brauerparametrization}.

\begin{lemma}\label{lem:H1-vanishing-permutation-summands}
Let \(p\) be a prime, and let \(N\) be a direct summand of a permutation
\(\mathbb Z_pG\)-lattice. Then
\[
H^1(\Gamma,N)=0
\]
for every subgroup \(\Gamma\le G\).
\end{lemma}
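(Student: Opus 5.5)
The plan is to reduce to transitive permutation lattices and then apply Shapiro's lemma. Since group cohomology is additive, a direct summand \(N\) of a permutation lattice \(P\) has \(H^1(\Gamma,N)\) as a direct summand of \(H^1(\Gamma,P)\). It therefore suffices to show that \(H^1(\Gamma,P)=0\) for every permutation \(\mathbb Z_pG\)-lattice \(P\) and every subgroup \(\Gamma\le G\).

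Next, restrict \(P\) to \(\Gamma\). A permutation \(G\)-lattice \(\mathbb Z_p[X]\) remains a permutation lattice over \(\Gamma\), and it decomposes along the \(\Gamma\)-orbits of \(X\) as
\[
\mathbb Z_p[X]\cong\bigoplus_{x\in\Gamma\backslash X}\mathbb Z_p[\Gamma/\Gamma_x],
\]
where \(\Gamma_x\) is the stabilizer of \(x\). By additivity again, we are reduced to the transitive case \(\mathbb Z_p[\Gamma/\Delta]\) with \(\Delta\le\Gamma\).

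For the transitive case, use Shapiro's lemma. Since \(\Gamma\) is finite, induction and coinduction agree, so
\[
H^1(\Gamma,\mathbb Z_p[\Gamma/\Delta])\cong H^1(\Delta,\mathbb Z_p)=\operatorname{Hom}(\Delta,\mathbb Z_p).
\]
This group is zero because \(\Delta\) is finite and \(\mathbb Z_p\) is torsion-free. Combining the three reductions gives \(H^1(\Gamma,N)=0\).

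There is no real obstacle here. The only point to check is that Shapiro's lemma and the identification \(H^1(\Delta,\mathbb Z_p)=\operatorname{Hom}(\Delta,\mathbb Z_p)\), which holds because the action is trivial, are valid with \(\mathbb Z_p\)-coefficients. Both are standard, since \(\mathbb Z_p\) is a flat \(\mathbb Z\)-module and cohomology with coefficients in a \(\mathbb Z_p\)-lattice is computed by the usual cochain complex.
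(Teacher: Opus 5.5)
Your proposal is correct and follows the paper's own argument. Both reduce to permutation lattices by additivity, decompose the restriction to \(\Gamma\) into transitive pieces \(\mathbb Z_p[\Gamma/\Delta]\), and conclude with Shapiro's lemma, since \(H^1(\Delta,\mathbb Z_p)=\operatorname{Hom}(\Delta,\mathbb Z_p)=0\); you simply spell out the orbit decomposition and the induction/coinduction identification more explicitly.
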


\begin{proof}
It suffices to prove the claim for a transitive permutation lattice. After
restriction to \(\Gamma\), such a lattice decomposes as a direct sum of
lattices of the form
\[
\mathbb Z_p[\Gamma/\Delta],
\qquad
\Delta\le \Gamma.
\]
By Shapiro's lemma,
\[
H^1(\Gamma,\mathbb Z_p[\Gamma/\Delta])
\cong
H^1(\Delta,\mathbb Z_p).
\]
Since \(\Delta\) is finite and \(\mathbb Z_p\) is torsion-free,
\[
H^1(\Delta,\mathbb Z_p)=\operatorname{Hom}(\Delta,\mathbb Z_p)=0.
\]
The result follows for direct sums and hence for direct summands.
\end{proof}

Let
\[
\widehat{\mathbb Z}:=\prod_p\mathbb Z_p,
\qquad
M_{\widehat{\mathbb Z}}:=M\otimes_{\mathbb Z}\widehat{\mathbb Z},
\]
and set
\[
F_{\widehat{\mathbb Z}}:=H^1(-,M_{\widehat{\mathbb Z}}).
\]

\begin{theorem}[Completed minimal projective presentation]
\label{thm:completed-minimal-projective-presentation}
The beginning of the minimal projective presentation of
\(F_{\widehat{\mathbb Z}}\) in
\(\operatorname{CoMack}_{\widehat{\mathbb Z}}(G)\) is
\[
FP_{\mathcal P_1}
\xrightarrow{\;FP_\delta\;}
FP_{\mathcal P_0}
\longrightarrow
F_{\widehat{\mathbb Z}}
\longrightarrow 0,
\]
where
\[
\mathcal P_0
=
A_2
\oplus
\bigoplus_{\substack{p\mid(n+1)\\ p\ \mathrm{odd}}}
\mathbb Z_p^-,
\]
and
\[
\mathcal P_1
=
\left(A_2\otimes_{\mathbb Z_2}\mathbb Z_2[S_2]\right)
\oplus
\bigoplus_{\substack{p\mid(n+1)\\ p\ \mathrm{odd}}}
\left(U_p\otimes_{\mathbb Z_p}\mathbb Z_p^-\right).
\]
Each local summand is regarded as a \(\widehat{\mathbb Z}G\)-module via the
projection \(\widehat{\mathbb Z}\to\mathbb Z_p\). The map
\[
\delta:\mathcal P_1\longrightarrow \mathcal P_0
\]
is the direct sum of
\[
1_{A_2}\otimes \operatorname{aug}_2:
A_2\otimes_{\mathbb Z_2}\mathbb Z_2[S_2]\longrightarrow A_2
\]
at \(p=2\), and
\[
\operatorname{aug}_{n+1}\otimes 1:
U_p\otimes_{\mathbb Z_p}\mathbb Z_p^-
\longrightarrow
\mathbb Z_p^-
\]
for each odd prime \(p\mid(n+1)\).
\end{theorem}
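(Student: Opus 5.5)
The plan is to compute $F_{\widehat{\mathbb Z}}$ one prime at a time and then assemble. Since $M$ is finitely generated and $G$ is finite, $H^1(H,M_{\widehat{\mathbb Z}})=\prod_p H^1(H,M_p)$, so $F_{\widehat{\mathbb Z}}=\prod_p F_{\mathbb Z_p}$. The category $\operatorname{CoMack}_{\widehat{\mathbb Z}}(G)$, restricted to functors of this kind, splits as the product of the categories $\operatorname{CoMack}_{\mathbb Z_p}(G)$. A minimal projective presentation of $F_{\widehat{\mathbb Z}}$ is therefore the product of the minimal presentations of the $F_{\mathbb Z_p}$. The task reduces to three local cases.

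\emph{Primes $p\nmid 2(n+1)$.} I will show $F_{\mathbb Z_p}=0$. Here $2$ and $n+1$ are units, so $U_p=\mathbb Z_p\oplus A_p$ and $\mathbb Z_p[S_2]=\mathbb Z_p\oplus\mathbb Z_p^-$, using Lemma~\ref{lem:Farahat-natural} and the idempotent argument of Lemma~\ref{lem:odd-Up-and-S2}. Consequently $M_p=A_p\otimes\mathbb Z_p^-$ is a direct summand of $U_p\otimes\mathbb Z_p[S_2]\cong\mathbb Z_p[G/(S_n\times 1)]$. Lemma~\ref{lem:H1-vanishing-permutation-summands} then gives $H^1(-,M_p)=0$, so these primes contribute nothing.

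\emph{Odd $p\mid(n+1)$.} I take the short exact sequence of Proposition~\ref{prop:odd-short-exact-sequence} and apply the long exact cohomology sequence for every $H\le G$. The middle term $U_p\otimes\mathbb Z_p^-$ is a summand of $\mathbb Z_p[G/(S_n\times1)]$, and $\mathbb Z_p^-$ is a summand of $\mathbb Z_p[G/(S_{n+1}\times 1)]$, because $2$ is invertible. By Lemma~\ref{lem:H1-vanishing-permutation-summands}, $H^1(H,U_p\otimes\mathbb Z_p^-)=0$. This yields an exact sequence of cohomological Mackey functors
\[
FP_{U_p\otimes\mathbb Z_p^-}\xrightarrow{FP_{\operatorname{aug}\otimes1}}FP_{\mathbb Z_p^-}\longrightarrow F_{\mathbb Z_p}\longrightarrow 0.
\]

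\emph{The prime $p=2$.} The same argument applies to the sequence of Proposition~\ref{prop:two-short-exact-sequence}. By Lemma~\ref{lem:two-local-lattices}, both $A_2$ and $A_2\otimes\mathbb Z_2[S_2]$ are summands of permutation lattices, so the $H^1$ of the middle term vanishes and again $FP_{\mathcal P_1}\to FP_{\mathcal P_0}\to F_{\mathbb Z_2}\to0$ is exact. In all cases, the fixed-point functors of summands of permutation lattices are projective in $\operatorname{CoMack}$, by Thévenaz--Webb. So each local sequence is a projective presentation.

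\emph{Minimality.} This is the main obstacle. I will use the Yoshida/Thévenaz--Webb correspondence: $N\mapsto FP_N$ identifies indecomposable summands of permutation $\mathbb Z_pG$-lattices with indecomposable projective cohomological Mackey functors. Each projective has a unique maximal subfunctor. So any surjection from an indecomposable projective onto a nonzero functor is a projective cover.

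For the degree-zero term, $A_2$ is indecomposable by Lemma~\ref{lem:two-local-lattices} and $\mathbb Z_p^-$ has rank one, so each local $FP_{\mathcal P_0}$ is indecomposable. Each $F_{\mathbb Z_p}$ is nonzero. For odd $p$, $F_{\mathbb Z_p}(G)\supseteq \mathbb Z_p/(n+1)$, coming from the cokernel of $H^0(G,U_p^-)\to H^0(G,\mathbb Z_p^-)$ evaluated at $S_{n+1}\times 1$ after twisting. For $p=2$, a similar evaluation at a subgroup containing $\sigma$ shows nonvanishing.

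For the degree-one term, $U_p\otimes\mathbb Z_p^-$ is indecomposable by Lemma~\ref{lem:odd-Up-and-S2}, and $A_2\otimes\mathbb Z_2[S_2]$ is indecomposable by Lemma~\ref{lem:two-local-lattices}. It then remains to check that the kernel of $FP_{\mathcal P_0}\to F$ is nonzero, for instance at the trivial subgroup, where $\delta$ is onto the nonzero lattice $\mathcal P_0$. Granting this, $FP_{\mathcal P_1}$ is the projective cover of that kernel. The delicate points are justifying the cover criterion and verifying these nonvanishing statements.
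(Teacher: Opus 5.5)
Your proposal is correct and follows the paper's proof essentially step by step. You use the same ingredients: the prime-by-prime short exact sequences from Propositions~\ref{prop:odd-short-exact-sequence} and~\ref{prop:two-short-exact-sequence}, vanishing of \(H^1\) on the permutation-summand middle terms, projectivity of fixed-point functors via Th\'evenaz--Webb, and minimality because a surjection from an indecomposable projective onto a nonzero functor is a projective cover. You are also more careful than the paper in checking that each \(F_{\mathbb Z_p}\) and each kernel is nonzero.

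One small slip: for odd \(p\) the value \(F_{\mathbb Z_p}(G)\) is actually \(0\), since \((\mathbb Z_p^-)^G=0\). The nonzero group \(\mathbb Z_p/(n+1)\mathbb Z_p\) is the value at \(S_{n+1}\times 1\). For \(p=2\), the value at \(1\times S_2\) is \(A_2/2A_2\neq 0\). Nonvanishing at some subgroup is all your argument needs.
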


\begin{proof}
At \(p=2\), Proposition~\ref{prop:two-short-exact-sequence} gives
\[
0\longrightarrow M_2
\longrightarrow
A_2\otimes_{\mathbb Z_2}\mathbb Z_2[S_2]
\xrightarrow{1_{A_2}\otimes \operatorname{aug}_2}
A_2
\longrightarrow 0.
\]
The middle term is a direct summand of a permutation lattice, so by
Lemma~\ref{lem:H1-vanishing-permutation-summands}, applying fixed points gives
an exact sequence of cohomological Mackey functors
\[
FP_{A_2\otimes_{\mathbb Z_2}\mathbb Z_2[S_2]}
\longrightarrow
FP_{A_2}
\longrightarrow
H^1(-,M_2)
\longrightarrow 0.
\]

For an odd prime \(p\mid(n+1)\), Proposition~\ref{prop:odd-short-exact-sequence}
gives
\[
0\longrightarrow M_p
\longrightarrow
U_p\otimes_{\mathbb Z_p}\mathbb Z_p^-
\xrightarrow{\operatorname{aug}_{n+1}\otimes 1}
\mathbb Z_p^-
\longrightarrow 0.
\]
Again the middle term is a direct summand of a permutation lattice, and hence
\[
FP_{U_p\otimes_{\mathbb Z_p}\mathbb Z_p^-}
\longrightarrow
FP_{\mathbb Z_p^-}
\longrightarrow
H^1(-,M_p)
\longrightarrow 0
\]
is exact.

If \(p\) is odd and \(p\nmid(n+1)\), then \(M_p\) is a direct summand of a
permutation \(\mathbb Z_pG\)-lattice, so
\[
H^1(-,M_p)=0
\]
by Lemma~\ref{lem:H1-vanishing-permutation-summands}. Thus the only nonzero
local contributions occur at \(p=2\) and at the odd primes dividing \(n+1\).

Assembling the nonzero local presentations over
\[
\widehat{\mathbb Z}=\prod_p\mathbb Z_p
\]
gives the displayed \(\widehat{\mathbb Z}\)-projective presentation. Indeed, by
\cite[Theorem~16.5]{T-W}, fixed-point functors attached to direct summands of
permutation lattices are projective cohomological Mackey functors.

It remains to check minimality, which may be done componentwise after applying
the projections
\[
\widehat{\mathbb Z}\longrightarrow \mathbb Z_p.
\]
At \(p=2\), Lemma~\ref{lem:two-local-lattices} shows that
\[
A_2
\quad\text{and}\quad
A_2\otimes_{\mathbb Z_2}\mathbb Z_2[S_2]
\]
are indecomposable direct summands of permutation lattices. For odd
\(p\mid(n+1)\), Lemma~\ref{lem:Farahat-natural} and the splitting
\[
\mathbb Z_p[S_2]\cong \mathbb Z_p\oplus\mathbb Z_p^-
\]
show that
\[
\mathbb Z_p^-
\quad\text{and}\quad
U_p\otimes_{\mathbb Z_p}\mathbb Z_p^-
\]
are indecomposable direct summands of permutation lattices.

Thus the fixed-point functors appearing in the local presentations above are
indecomposable projectives. Hence the local surjections
\[
FP_{A_2}\longrightarrow H^1(-,M_2)
\qquad\text{and}\qquad
FP_{\mathbb Z_p^-}\longrightarrow H^1(-,M_p)
\]
are projective covers, by the standard right-minimality criterion for epimorphisms
from indecomposable projectives with local endomorphism rings. The same argument
applies to the surjections from the degree-one terms onto the corresponding
kernels. Therefore each nonzero local presentation is minimal. All other local
components vanish, so the completed presentation is minimal.
\end{proof}
\section{The integral permutation presentation}

We now record an integral degree-zero permutation term which contains all the
local minimal degree-zero terms described above. In particular,  if \(Q\) is a
direct summand of a permutation \(RG\)-lattice, then \(FP_Q\) is projective in
the category of cohomological Mackey functors. Conversely, the projective
objects in this category are of this form; see \cite[Chapter~16]{T-W}.

A \emph{projective presentation} of a cohomological Mackey functor \(\mathcal F\)
is an exact sequence
\[
FP_{Q_1}
\longrightarrow
FP_{Q_0}
\longrightarrow
\mathcal F
\longrightarrow 0
\]
where \(Q_0\) and \(Q_1\) are direct summands of permutation \(RG\)-lattices. If
the lattices \(Q_0\) and \(Q_1\) themselves are permutation lattices, we call this
a \emph{permutation presentation}. Similarly, a \emph{permutation resolution} is
a projective resolution whose terms are fixed-point functors attached to
permutation lattices.

The local minimal projective presentations computed above have degree-zero
terms given by direct summands of permutation lattices. The goal of this section
is to choose a single integral permutation degree-zero and  degree-one terms whose localizations
contain all of these local minimal summands.

\begin{prop}\label{prop:local-degree-zero-cover}
For a prime \(p\), set
\[
F_p:=H^1(-,M_p).
\]
Let \((\mathcal P_0)_p\) denote the \(p\)-primary degree-zero summand from
Theorem~\ref{thm:completed-minimal-projective-presentation}. Then the following
hold.

\begin{enumerate}[label=\textup{(\roman*)}]
\item The \(2\)-primary degree-zero summand
\[
FP_{(\mathcal P_0)_2}=FP_{A_2}
\]
is a direct summand of
\[
FP_{\mathbb Z_2[S_{n+1}/S_n]}.
\]

\item For every odd prime \(p\mid(n+1)\), the \(p\)-primary degree-zero summand
\[
FP_{(\mathcal P_0)_p}=FP_{\mathbb Z_p^-}
\]
is a direct summand of
\[
FP_{\mathbb Z_p[S_2]}.
\]

\item If \(p\) is odd and \(p\nmid(n+1)\), then
\[
F_p=0.
\]
\end{enumerate}
\end{prop}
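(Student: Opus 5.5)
The plan is to treat the three items separately. Each reduces to a splitting of lattices already recorded in Section~\ref{gendell}, together with the fact that \(N\mapsto FP_N=H^0(-,N)\) is an additive functor from \(\mathbb Z_pG\)-lattices to cohomological Mackey functors. Additivity means that a direct sum decomposition \(N\cong N'\oplus N''\) of \(\mathbb Z_pG\)-lattices gives \(FP_N\cong FP_{N'}\oplus FP_{N''}\). Indeed, fixed points commute with finite direct sums, and restriction, transfer and conjugation are defined componentwise. So it suffices to exhibit each local degree-zero lattice as a \(G\)-equivariant direct summand of the claimed permutation lattice.

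For (i), I will invoke Lemma~\ref{lem:two-local-lattices}. Since \(n+1\) is odd, \(n+1\in\mathbb Z_2^\times\), so the map \(x\mapsto x-\tfrac{\operatorname{aug}_{n+1}(x)}{n+1}(u_0+\cdots+u_n)\) is an \(S_{n+1}\)-equivariant projection onto \(A_2\). This gives \(\mathbb Z_2[S_{n+1}/S_n]\cong\mathbb Z_2\oplus A_2\) as \(\mathbb Z_2S_{n+1}\)-lattices. Inflating along \(G\to S_{n+1}\), with \(S_2\) acting trivially on both sides, makes this a decomposition of \(\mathbb Z_2G\)-lattices. Applying \(FP\) then yields \(FP_{\mathbb Z_2[S_{n+1}/S_n]}\cong FP_{\mathbb Z_2}\oplus FP_{A_2}\).

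For (ii), Lemma~\ref{lem:odd-Up-and-S2} gives \(\mathbb Z_p[S_2]\cong\mathbb Z_p\oplus\mathbb Z_p^-\) through the idempotents \((1\pm\sigma)/2\). These idempotents are central in \(\mathbb Z_pG\) because \(\sigma\) is central in \(G=S_{n+1}\times S_2\). The decomposition is therefore \(G\)-equivariant, and applying \(FP\) gives \(FP_{\mathbb Z_p^-}\) as a summand of \(FP_{\mathbb Z_p[S_2]}\).

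For (iii), I will use the \(p\)-localization of \eqref{eq:Lamarche}. When \(p\) is odd and \(p\nmid(n+1)\), both \(2\) and \(n+1\) are units in \(\mathbb Z_p\). The augmentation sequences \(0\to A_p\to U_p\to\mathbb Z_p\to 0\) and \(0\to\mathbb Z_p^-\to\mathbb Z_p[S_2]\to\mathbb Z_p\to 0\) therefore both split equivariantly, by the same averaging idempotents as above. Hence
\[
U_p\otimes\mathbb Z_p[S_2]\cong (A_p\otimes\mathbb Z_p^-)\oplus A_p\oplus\mathbb Z_p^-\oplus\mathbb Z_p,
\]
and \(M_p=A_p\otimes\mathbb Z_p^-\) (as in the proof of Proposition~\ref{prop:M-description}) is a direct summand of the permutation lattice \(\mathbb Z_p[G/(S_n\times 1)]\). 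Lemma~\ref{lem:H1-vanishing-permutation-summands} then gives \(F_p(H)=H^1(H,M_p)=0\) for all \(H\le G\), so \(F_p=0\).

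The only point needing care is the equivariance of the splittings. In (i) the complement must be \(G\)-stable and not merely \(S_{n+1}\)-stable, which holds because \(S_2\) acts trivially. In (iii) the tensor-product decomposition must be \(G\)-equivariant, which follows by tensoring the \(S_{n+1}\)-splitting with the \(S_2\)-splitting, since \(G\) is a direct product. I expect no genuine obstacle beyond this verification.
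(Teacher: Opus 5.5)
Your proposal is correct and follows the paper's proof: items (i) and (ii) come from the equivariant splittings \(\mathbb Z_2[S_{n+1}/S_n]\cong\mathbb Z_2\oplus A_2\) and \(\mathbb Z_p[S_2]\cong\mathbb Z_p\oplus\mathbb Z_p^-\), together with additivity of \(N\mapsto FP_N\). For (iii) the paper simply cites Theorem~\ref{thm:completed-minimal-projective-presentation}, whose proof rests on the fact you make explicit: \(M_p=A_p\otimes\mathbb Z_p^-\) is a direct summand of \(\mathbb Z_p[G/(S_n\times 1)]\), so Lemma~\ref{lem:H1-vanishing-permutation-summands} applies.
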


\begin{proof}
By Theorem~\ref{thm:completed-minimal-projective-presentation}, the nonzero
local degree-zero summands are \(A_2\) at \(p=2\), and \(\mathbb Z_p^-\) at the
odd primes \(p\mid(n+1)\). For odd primes \(p\nmid(n+1)\), the same theorem gives
\[
F_p=0.
\]

At \(p=2\), since \(n+1\) is odd, the augmentation sequence
\[
0\longrightarrow A_2
\longrightarrow
\mathbb Z_2[S_{n+1}/S_n]
\longrightarrow
\mathbb Z_2
\longrightarrow 0
\]
splits. Hence
\[
A_2\mid \mathbb Z_2[S_{n+1}/S_n],
\]
and therefore
\[
FP_{A_2}\mid FP_{\mathbb Z_2[S_{n+1}/S_n]}.
\]

Now let \(p\mid(n+1)\) be odd. Since
\[
\mathbb Z_p[S_2]\cong \mathbb Z_p\oplus \mathbb Z_p^-,
\]
we have
\[
\mathbb Z_p^-\mid \mathbb Z_p[S_2].
\]
Therefore
\[
FP_{\mathbb Z_p^-}\mid FP_{\mathbb Z_p[S_2]}.
\]
This proves the proposition.
\end{proof}

\begin{remark}\label{rem:global-degree-zero-term}
Proposition~\ref{prop:local-degree-zero-cover} shows that
\[
FP_{\mathbb Z[S_2]}
\oplus
FP_{\mathbb Z[S_{n+1}/S_n]}
\]
is a natural integral degree-zero permutation term for
\[
F=H^1(-,M).
\]
It is not locally minimal; rather, after \(p\)-adic completion it contains the
local minimal degree-zero summands from
Theorem~\ref{thm:completed-minimal-projective-presentation}, and it is chosen so
that the first differential can be written integrally and uniformly.
\end{remark}
\subsection{The integral first differential}

Set
\[
Q_1:=
\mathbb Z[S_{n+1}/S_n]\otimes_{\mathbb Z}\mathbb Z[S_2]
\oplus \mathbb Z^{\oplus 2},
\]
and
\[
Q_0:=
\mathbb Z[S_2]\oplus \mathbb Z[S_{n+1}/S_n].
\]
Define a \(G\)-lattice homomorphism
\[
\Psi:Q_1\longrightarrow Q_0
\]
by
\[
\Psi(m,s_1,s_2)
=
\left(
(\operatorname{aug}_{n+1}\otimes 1)(m)+s_1(1+\sigma),\,
(1\otimes \operatorname{aug}_2)(m)+s_2\sum_{i=0}^n u_i
\right).
\]
For a commutative ring \(R\), write
\[
Q_{i,R}:=Q_i\otimes_{\mathbb Z}R,
\qquad
\Psi_R:=\Psi\otimes_{\mathbb Z}R.
\]
For a prime \(p\), we write simply
\[
Q_{i,p}:=Q_{i,\mathbb Z_p},
\qquad
\Psi_p:=\Psi_{\mathbb Z_p}.
\]

For later reference, with respect to the ordered bases
\[
(u_0\otimes 1,\dots,u_n\otimes 1,
u_0\otimes \sigma,\dots,u_n\otimes \sigma
\mid \mathbf 1^{(1)}\mid \mathbf 1^{(2)})
\]
of \(Q_1\), and
\[
(1,\sigma\mid u_0,\dots,u_n)
\]
of \(Q_0\), the map \(\Psi\) is represented by
\begin{equation}\label{eq:matrix-Psi}
\begin{pmatrix}
\mathbf 1_{1\times(n+1)} & 0 & 1 & 0\\
0 & \mathbf 1_{1\times(n+1)} & 1 & 0\\
I_{n+1} & I_{n+1} & 0 & \mathbf 1_{(n+1)\times 1}
\end{pmatrix}.
\end{equation}

\begin{prop}\label{prop:localized-H0-exactness}
For every prime \(p\), the sequence of cohomological Mackey functors
\[
FP_{Q_{1,p}}
\xrightarrow{\;FP_{\Psi_p}\;}
FP_{Q_{0,p}}
\longrightarrow
F_p
\longrightarrow 0
\]
is exact.
\end{prop}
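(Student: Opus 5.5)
The plan is to identify the kernel of \(\Psi\) and to show that \(\Psi\) is surjective, so that the exactness statement reduces to the vanishing of \(H^1\) on permutation lattices (Lemma~\ref{lem:H1-vanishing-permutation-summands}). I will work integrally first and only localize at \(p\) at the very end. Write
\[
N:=\sum_{i=0}^n u_i ,\qquad D:=\operatorname{aug}_2\oplus(-\operatorname{aug}_{n+1}):Q_0\to\mathbb Z .
\]

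\textbf{Step 1: \(\Psi\) is surjective.} On the summand \(\mathbb Z[S_{n+1}/S_n]\otimes\mathbb Z[S_2]\), \(\Psi\) is Lamarche's \(d_1\) with the two target coordinates swapped. By Proposition~\ref{prop:M-description}, its image is therefore the kernel of \(D\). On the \(\mathbb Z^{\oplus 2}\) summand one computes
\[
D\circ\Psi(0,s_1,s_2)=2s_1-(n+1)s_2 .
\]
Since \(n+1\) is odd, \(\gcd(2,n+1)=1\), so this map is onto \(\mathbb Z\). Hence \(\Psi\) is onto \(Q_0\).

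\textbf{Step 2: the kernel of \(\Psi\).} The same computation gives an exact sequence of \(G\)-lattices
\[
0\to M\to K:=\ker\Psi\to \mathbb Z\to 0 .
\]
Here the map \(K\to\mathbb Z\) sends \((m,s_1,s_2)\) to the coordinate of \((s_1,s_2)\) along the generator \((n+1,2)\) of \(\ker\bigl(2s_1-(n+1)s_2\bigr)\); the \(G\)-action on this \(\mathbb Z\) is trivial. I claim this sequence splits. The element
\[
\bigl(-N\otimes(1+\sigma),\,n+1,\,2\bigr)\in Q_1
\]
is \(G\)-invariant. It lies in \(K\), because
\[
(\operatorname{aug}_{n+1}\otimes1)\bigl(N\otimes(1+\sigma)\bigr)=(n+1)(1+\sigma),
\qquad
(1\otimes\operatorname{aug}_2)\bigl(N\otimes(1+\sigma)\bigr)=2N .
\]
It maps to a generator of \(\mathbb Z\). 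Hence \(K\cong M\oplus\mathbb Z\) as \(G\)-lattices, and this remains true after tensoring with \(\mathbb Z_p\).

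\textbf{Step 3: pass to Mackey functors.} For each \(H\le G\), take \(H\)-cohomology of
\[
0\to K_p\to Q_{1,p}\xrightarrow{\Psi_p}Q_{0,p}\to0 .
\]
Since \(Q_{1,p}\) is a permutation lattice, Lemma~\ref{lem:H1-vanishing-permutation-summands} gives \(H^1(H,Q_{1,p})=0\). We therefore obtain an exact sequence
\[
Q_{1,p}^H\to Q_{0,p}^H\to H^1(H,K_p)\to0 .
\]
The connecting maps commute with restriction, corestriction and conjugation, so this is an exact sequence of cohomological Mackey functors
\[
FP_{Q_{1,p}}\to FP_{Q_{0,p}}\to H^1(-,K_p)\to0 .
\]
Finally,
\[
H^1(-,K_p)\cong H^1(-,M_p)\oplus H^1(-,\mathbb Z_p)=F_p ,
\]
since \(H^1(H,\mathbb Z_p)=\operatorname{Hom}(H,\mathbb Z_p)=0\). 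The map \(FP_{Q_{0,p}}\to F_p\) is the connecting map followed by the projection to the \(M_p\) factor.

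\textbf{Main obstacle.} The only delicate point is Step 2. A priori \(K\) is merely an extension of \(\mathbb Z\) by \(M\), and the resulting extra term \(H^0(-,\mathbb Z)\) could destroy the identification of the cokernel with \(F_p\). This is where the explicit invariant lift \(\bigl(-N\otimes(1+\sigma),\,n+1,\,2\bigr)\) is needed. If one preferred to avoid it, the fallback is to check prime by prime: the connecting map \(H^0(H,\mathbb Z_p)\to H^1(H,M_p)\) vanishes because the generator already lifts to \(K^G\). The remaining checks, surjectivity and compatibility with the Mackey structure maps, are routine.
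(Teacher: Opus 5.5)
Your proof is correct, but it takes a different route from the paper.

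\textbf{The paper's route.} The paper argues prime by prime. At \(p=2\) it uses \(\mathbb Z_2[S_{n+1}/S_n]\cong\mathbb Z_2\oplus A_2\). At odd \(p\mid(n+1)\) it uses \(\mathbb Z_p[S_2]\cong\mathbb Z_p\oplus\mathbb Z_p^-\). In each case this splits \(\Psi_p\) into two blocks. One is the block from the local minimal presentation of Theorem~\ref{thm:completed-minimal-projective-presentation}, whose fixed-point cokernel is \(F_p\). The other is a complementary block that is split surjective and so contributes no cokernel. For \(p\nmid 2(n+1)\), all of \(\Psi_p\) is split surjective and \(F_p=0\).

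\textbf{Your route.} You work integrally. You show \(\Psi\) is onto using \(\gcd(2,n+1)=1\), and identify \(\ker\Psi\) as an extension of the trivial lattice \(\mathbb Z\) by \(M\). You then split this extension with the \(G\)-invariant element \(\bigl(-N\otimes(1+\sigma),\,n+1,\,2\bigr)\), so \(\ker\Psi\cong M\oplus\mathbb Z\). After that, the long exact sequence together with \(H^1(-,\mathbb Z_p)=0\) finishes the argument. You also correctly identify why the invariant lift is needed: it kills the connecting map \(H^0(-,\mathbb Z_p)\to H^1(-,M_p)\).

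\textbf{Comparison.} Your argument is more elementary and uniform: it needs no Farahat input, no local splittings, and no case distinction on \(p\). Nothing in it uses completion, since Shapiro's lemma gives \(H^1(H,Q_1)=0\) over \(\mathbb Z\) as well. So the same argument proves Theorem~\ref{thm:integral-first-differential-H0} directly, and the paper's gluing of primary components becomes unnecessary. The paper's route instead buys structural information. It exhibits \(\Psi_p\) as the local minimal presentation plus an inessential split-surjective summand. That is what Remark~\ref{rem:global-degree-zero-term} and the degree-zero analysis of the final section lean on.

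The two computations agree. At \(p=2\) and at odd \(p\mid(n+1)\), your invariant element lies in the paper's complementary block and generates its kernel. So your extra trivial summand \(\mathbb Z\) is exactly the kernel of that split block.
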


\begin{proof}
We argue prime by prime.

If \(p=2\), the decomposition
\[
\mathbb Z_2[S_{n+1}/S_n]\cong \mathbb Z_2\oplus A_2
\]
decomposes \(\Psi_2\). The non-split direct summand is
\[
A_2\otimes_{\mathbb Z_2}\mathbb Z_2[S_2]
\xrightarrow{\,1_{A_2}\otimes\operatorname{aug}_2\,}
A_2.
\]
By the \(2\)-local minimal presentation, the cokernel of the induced map on
fixed-point functors is \(F_2\). The complementary block is split-surjective,
so it contributes no cokernel.

If \(p\mid(n+1)\) is odd, the decomposition
\[
\mathbb Z_p[S_2]\cong \mathbb Z_p\oplus \mathbb Z_p^-
\]
decomposes \(\Psi_p\). The non-split block is
\[
U_p\otimes_{\mathbb Z_p}\mathbb Z_p^-
\xrightarrow{\operatorname{aug}_{n+1}\otimes 1}
\mathbb Z_p^-.
\]
By the odd \(p\)-local minimal presentation, the cokernel of the induced map on
fixed-point functors is \(F_p\). Again, the complementary block is
split-surjective.

Finally, if \(p\nmid 2(n+1)\), then both \(2\) and \(n+1\) are units in
\(\mathbb Z_p\). Hence both relevant augmentation sequences split, so
\(\Psi_p\) is split-surjective. In this case \(M_p\) is a direct summand of a
permutation lattice, and therefore
\[
F_p=H^1(-,M_p)=0
\]
by Lemma~\ref{lem:H1-vanishing-permutation-summands}. Thus the sequence is
exact for every prime \(p\).
\end{proof}
\begin{theorem}\label{thm:integral-first-differential-H0}
The sequence
\[
FP_{Q_1}
\xrightarrow{\;FP_{\Psi}\;}
FP_{Q_0}
\longrightarrow
F
\longrightarrow 0
\]
is exact.
\end{theorem}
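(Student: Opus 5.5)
Proving the integral statement means passing from the prime-local exactness of Proposition~\ref{prop:localized-H0-exactness} to exactness over \(\mathbb Z\). First, the map \(FP_{Q_0}\to F\) has to be specified integrally. I would construct an integral \(G\)-lattice surjection \(\pi: Q_1\to Q_0\)-compatible short exact sequence
\[
0\longrightarrow M\longrightarrow Q_1\xrightarrow{\ \Psi\ } \ker\bigl(d_2'\bigr)\longrightarrow 0,
\]
or, more simply, identify \(\operatorname{coker}(FP_\Psi)\) with \(H^1(-,M)\) via a short exact sequence \(0\to M\to Q_1\to C\to 0\) with \(C:=\operatorname{im}\Psi\). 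Concretely, \(\ker\Psi\) is computed from the matrix \eqref{eq:matrix-Psi}: projecting onto the first factor and using Proposition~\ref{prop:M-description}, one checks that \(\ker\Psi\cong M\) (the \(\mathbb Z^{\oplus2}\) coordinates are forced to vanish by comparing augmentations). Moreover, \(\operatorname{coker}\Psi\) is torsion-free, and in fact \(\Psi\) has image of finite index in a saturated sublattice \(C\subseteq Q_0\) with \(Q_0/C\cong \mathbb Z\) via \(d_2\)-type augmentation. The long exact sequence for \(0\to M\to Q_1\to C\to 0\) then gives \(C^H\to H^1(H,M)\to H^1(H,Q_1)=0\), since \(Q_1\) is a permutation lattice (use Lemma~\ref{lem:H1-vanishing-permutation-summands} integrally, with the same Shapiro argument). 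This yields a surjection \(FP_C\to F\) with kernel the image of \(FP_{Q_1}\).

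The remaining issue is to replace \(FP_C\) by \(FP_{Q_0}\), that is, to extend the connecting map from \(C^H\) to \(Q_0^H\), or equivalently to show that \(FP_{Q_0}\to F\) exists and that the image of \(FP_{Q_1}\) in \(FP_{Q_0}\) is exactly its kernel. Here I would argue pointwise in \(H\). Each term \(FP_{Q_i}(H)=Q_i^H\) is a finitely generated free abelian group, and \(F(H)=H^1(H,M)\) is finite, killed by \(|G|\). Exactness of a sequence of finitely generated abelian groups can be tested after tensoring with every \(\mathbb Z_p\), since \(\mathbb Z_p\) is flat over \(\mathbb Z\) and \(\bigoplus_p\) detection works for finite groups. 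Taking fixed points commutes with the flat base change \(-\otimes\mathbb Z_p\), so \(Q_i^H\otimes\mathbb Z_p=Q_{i,p}^H\) and \(H^1(H,M)\otimes \mathbb Z_p=H^1(H,M_p)\). Hence the localized sequence at \(H\) is precisely the one in Proposition~\ref{prop:localized-H0-exactness}, which is exact. A complex of finitely generated abelian groups that is exact after \(\otimes\mathbb Z_p\) for all \(p\) is exact, because homology commutes with flat base change and a finitely generated group \(A\) with \(A\otimes\mathbb Z_p=0\) for all \(p\) vanishes. The compatibility with restriction, corestriction and conjugation is automatic, since all maps are induced by \(G\)-lattice maps or by connecting homomorphisms, which are natural.

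The main obstacle is the first step: producing an integral map \(FP_{Q_0}\to F\) whose \(p\)-localizations agree with the maps used in Proposition~\ref{prop:localized-H0-exactness}. The issue is that \(\Psi\) is not surjective onto \(Q_0\); its cokernel is \(\mathbb Z\), via \((x,y)\mapsto \operatorname{aug}_2(x)-\operatorname{aug}_{n+1}(y)\) rescaled. I would first try to handle this by splicing. Using \(0\to C\to Q_0\to \mathbb Z\to 0\) together with \(H^1(H,\mathbb Z)=0\) gives \(Q_0^H\to\mathbb Z\) with kernel \(C^H\), so the connecting map is only defined on \(C^H\). I would then use the local splittings of Proposition~\ref{prop:localized-H0-exactness}: at each \(p\) the complementary block of \(\Psi_p\) is split-surjective, so \(Q_{0,p}=C_p\oplus(\text{complement})\). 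Extending by zero on these complements gives the map \(FP_{Q_{0,p}}\to F_p\). Since \(F(H)=\bigoplus_p F_p(H)\), these local maps assemble into the integral map. Checking that the assembled map is compatible with the Mackey structure across primes is the step that needs care.
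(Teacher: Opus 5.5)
\textbf{There is a genuine gap.} Your plan rests on a miscomputation of \(\Psi\): you have effectively ignored the two columns \(\mathbf 1^{(1)},\mathbf 1^{(2)}\) of \eqref{eq:matrix-Psi}.

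\emph{The image.} Put \(\lambda(x,y):=\operatorname{aug}_2(x)-\operatorname{aug}_{n+1}(y)\) on \(Q_0\). By Proposition~\ref{prop:M-description}, the image of \(m\mapsto\Psi(m,0,0)\) is exactly \(\ker\lambda\). But \(\lambda(\Psi(0,1,0))=\lambda(1+\sigma,0)=2\) and \(\lambda(\Psi(0,0,1))=\lambda(0,\sum_i u_i)=-(n+1)\). These generate \(\mathbb Z\) because \(n+1\) is odd. So \(\Psi\) is \emph{surjective}, and its cokernel is \(0\), not \(\mathbb Z\).

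\emph{The kernel.} Comparing augmentations on \(\ker\Psi\) gives \(2s_1=(n+1)s_2\), not \(s_1=s_2=0\). In fact \(\ker\Psi=M\oplus\mathbb Z w\) with the \(G\)-fixed vector \(w=(-\sum_i u_i\otimes(1+\sigma),\,n+1,\,2)\). A rank count already rules out your version: \(\operatorname{rk}Q_1=2n+4\), whereas your kernel and image have ranks \(n\) and \(n+2\).

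\emph{Why this matters.} If \(\operatorname{coker}\Psi\) really were \(\mathbb Z\), then \(Q_0^H/\Psi(Q_1^H)\) would surject onto a nonzero subgroup of \(\mathbb Z\) and so be infinite. Since \(H^1(H,M)\) is finite, the statement you are trying to prove would then be false. So the ``main obstacle'' you isolate does not exist. Your proposed repair via local splittings is built on the same error: the split-surjective complementary blocks in Proposition~\ref{prop:localized-H0-exactness} lie inside the image of \(\Psi_p\), so there is no complement of \(\operatorname{im}\Psi_p\) to extend by zero on. The final assembly step is also left open.

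\textbf{How to repair it.} With the correct computation, your first paragraph already gives a complete proof, more direct than the paper's. The sequence \(0\to M\oplus\mathbb Z\to Q_1\xrightarrow{\Psi}Q_0\to 0\) is exact. The long exact cohomology sequence for \(H\le G\), together with \(H^1(H,Q_1)=0\) (Shapiro) and \(H^1(H,\mathbb Z)=\operatorname{Hom}(H,\mathbb Z)=0\), yields \(Q_1^H\to Q_0^H\to H^1(H,M)\to 0\). This is natural in \(H\), since connecting maps commute with restriction, corestriction and conjugation.

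\textbf{The paper's route.} The paper never constructs an integral map \(FP_{Q_0}\to F\) in advance. It takes \(C:=\operatorname{coker}(FP_\Psi)\) as a Mackey functor and identifies \(C\otimes\mathbb Z_p\) with \(F\otimes\mathbb Z_p\) by flat base change and Proposition~\ref{prop:localized-H0-exactness}. It shows \(C\) is finite because \(C\otimes\mathbb Q=0\), then glues along primary decompositions. The cross-prime compatibility you flagged is automatic there, because restriction, corestriction and conjugation preserve \(p\)-primary parts.
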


\begin{proof}
Let
\[
C:=\operatorname{coker}(FP_{\Psi}).
\]
Since \(\mathbb Z_p\) is flat over \(\mathbb Z\), and since fixed points of the
finite-rank free lattices \(Q_0\) and \(Q_1\) commute with flat base change, we have
a natural isomorphism of cohomological Mackey functors
\[
C\otimes_{\mathbb Z}\mathbb Z_p
\cong
\operatorname{coker}(FP_{\Psi_p}).
\]
By Proposition~\ref{prop:localized-H0-exactness}, the latter cokernel is
\(F_p=H^1(-,M_p)\). Also, for every subgroup \(U\leq G\), the natural base-change
map gives
\[
H^1(U,M)\otimes_{\mathbb Z}\mathbb Z_p
\cong
H^1(U,M_p).
\]
Hence we obtain a natural isomorphism of cohomological Mackey functors
\[
C\otimes_{\mathbb Z}\mathbb Z_p
\cong
F\otimes_{\mathbb Z}\mathbb Z_p
\]
for every prime \(p\).

Over \(\mathbb Q\), both \(2\) and \(n+1\) are invertible, so the same splitting
argument shows that
\[
C\otimes_{\mathbb Z}\mathbb Q=0.
\]
Thus \(C(U)\) is finite for every subgroup \(U\leq G\). Since
\[
F(U)=H^1(U,M)
\]
is also finite, both \(C\) and \(F\) decompose functorially into their primary
parts:
\[
C\cong \bigoplus_p C\otimes_{\mathbb Z}\mathbb Z_p,
\qquad
F\cong \bigoplus_p F\otimes_{\mathbb Z}\mathbb Z_p.
\]
The \(p\)-local isomorphisms above therefore glue to a natural isomorphism
\[
C\cong F
\]
of cohomological Mackey functors. Composing the quotient map
\[
FP_{Q_0}\longrightarrow C
\]
with this isomorphism gives the desired exact sequence
\[
FP_{Q_1}
\xrightarrow{\;FP_{\Psi}\;}
FP_{Q_0}
\longrightarrow
F
\longrightarrow 0.
\]
\end{proof}

\section{The \(K,L\)-Brauer presentation}\label{brauerparametrization}

The construction above is \(G\)-equivariant. Thus, if the image of the Galois
action on the fan is a subgroup \(H\leq G\), the same method applies after
restricting the relevant \(G\)-lattices and \(G\)-sets to \(H\). In that
generality, the fields \(K\), \(L\), and \(KL\) below are replaced by the finite
étale \(k\)-algebras attached to the corresponding restricted \(H\)-sets. To
keep the notation concrete, we state the Brauer parametrization in the case
where the chosen Galois splitting field \(E/k\) has
\(\operatorname{Gal}(E/k)\simeq G\) and these finite étale \(k\)-algebras are
field extensions.

We now translate the permutation-projective presentation of \(F=H^1(-,M)\) into
a Brauer-theoretic description of \(H^1(k,T)\). We apply
\cite[Theorem~6.1]{retforms} to the exact sequence
\[
FP_{Q_1}
\xrightarrow{\,FP_{\Psi}\,}
FP_{Q_0}
\longrightarrow
F
\longrightarrow 0
\]
from Theorem~\ref{thm:integral-first-differential-H0}, where
\[
Q_1=
\mathbb Z[S_{n+1}/S_n]\otimes_{\mathbb Z}\mathbb Z[S_2]
\oplus \mathbb Z^{\oplus 2}
\]
and
\[
Q_0=
\mathbb Z[S_2]\oplus \mathbb Z[S_{n+1}/S_n].
\]

For a finite Galois extension \(E/k\) with
\[
\operatorname{Gal}(E/k)\cong G=S_{n+1}\times S_2,
\]
set
\[
H_K:=S_{n+1}\times 1,
\qquad
H_L:=S_n\times S_2,
\qquad
H_{KL}:=S_n\times 1,
\]
and
\[
K:=E^{H_K},
\qquad
L:=E^{H_L},
\qquad
KL:=E^{H_{KL}}.
\]
We write
\[
\Br(E/F):=\ker\!\bigl(\Br(F)\to \Br(E)\bigr)
\]
for the relative Brauer group.

\begin{theorem}\label{thm:gen-dPn-relative-brauer}
Let \(n\ge 2\) be even. Let \(X\) be a \(k\)-form of the generalized del Pezzo
variety \(V_n\), and let \(T\) be the dense \(k\)-torus acting faithfully on \(X\)
with an open orbit. Assume that \(E/k\) is the minimal Galois splitting field of
\(T\), and that the chosen isomorphism
\[
\operatorname{Gal}(E/k)\cong S_{n+1}\times S_2
\]
realizes the specified \(G\)-lattice structure on \(M\).

Then \(K/k\) is quadratic, \(L/k\) has degree \(n+1\), and
\[
KL\simeq K\otimes_k L.
\]

Moreover, \(H^1(k,T)\) is naturally identified with the subgroup of pairs
\[
(B,Q)\in \Br(K)\times \Br(L)
\]
satisfying
\[
\operatorname{Res}_{E/K}(B)=0,
\qquad
\operatorname{Res}_{E/L}(Q)=0,
\]
\[
\operatorname{Cor}_{K/k}(B)=0,
\qquad
\operatorname{Cor}_{L/k}(Q)=0,
\]
and
\[
\operatorname{Res}_{KL/K}(B)+\operatorname{Res}_{KL/L}(Q)=0
\quad\text{in }\Br(KL).
\]
For every such pair, one has
\[
(n+1)B=0,
\qquad
2Q=0.
\]
\end{theorem}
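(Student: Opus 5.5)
The plan is to feed the exact sequence of Theorem~\ref{thm:integral-first-differential-H0} into \cite[Theorem~6.1]{retforms}, and then read off each term on the Galois side.

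\textbf{Step 1: field degrees.} First record the field-theoretic facts.
\begin{itemize}
\item \([G:H_K]=2\), so \(K/k\) is quadratic.
\item \([G:H_L]=n+1\), so \([L:k]=n+1\).
\item \(H_K\cap H_L=S_n\times 1=H_{KL}\), so \(KL=E^{H_{KL}}\).
\end{itemize}
For the compositum, note that \(H_K\) acts transitively on \(G/H_L\), because \(S_{n+1}\) is transitive on \(\Omega\). Hence \(H_KH_L=G\), which gives
\[
[KL:k]=[G:H_{KL}]=2(n+1)=[K:k][L:k].
\]
Since \(K\otimes_kL\) has the same dimension and surjects onto \(KL\), we conclude \(KL\simeq K\otimes_kL\).

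\textbf{Step 2: the Brauer kernel.} Identify the terms of the presentation with \(G\)-sets:
\[
Q_0=\mathbb Z[G/H_K]\oplus\mathbb Z[G/H_L],
\qquad
Q_1=\mathbb Z[G/H_{KL}]\oplus\mathbb Z[G/G]^{\oplus 2}.
\]
These use the identifications \(\mathbb Z[S_2]=\mathbb Z[G/H_K]\), \(\mathbb Z[S_{n+1}/S_n]=\mathbb Z[G/H_L]\), and \(\mathbb Z[S_{n+1}/S_n]\otimes\mathbb Z[S_2]=\mathbb Z[G/H_{KL}]\).

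Theorem~6.1 of \cite{retforms} then says \(H^1(k,T)\) is the kernel of a map
\[
\Br(E/K)\oplus\Br(E/L)\to\Br(E/KL)\oplus\Br(E/k)^{\oplus2},
\]
whose components are determined by the matrix \eqref{eq:matrix-Psi} of \(\Psi\), read via the dictionary of \cite{retforms}. In that dictionary:
\begin{itemize}
\item a \(G\)-map \(\mathbb Z[G/H]\to\mathbb Z[G/H']\) is a double-coset sum of transfer and conjugation maps;
\item the augmentation-type map \(\mathbb Z[G/H_{KL}]\to\mathbb Z[G/H_K]\) (sending \(u_i\otimes\tau\mapsto\tau\)) corresponds to \(\operatorname{Res}_{KL/K}\) on the Brauer side;
\item similarly, \(u_i\otimes\tau\mapsto u_i\) corresponds to \(\operatorname{Res}_{KL/L}\);
\item the norm elements \(1\mapsto 1+\sigma\) and \(1\mapsto\sum u_i\), from the trivial orbit, correspond to \(\operatorname{Cor}_{K/k}\) and \(\operatorname{Cor}_{L/k}\).
\end{itemize}
Unwinding these gives exactly the listed conditions. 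The conditions \(\operatorname{Res}_{E/K}(B)=0\) and \(\operatorname{Res}_{E/L}(Q)=0\) just express membership in the relative Brauer groups.

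I expect the main obstacle to be the variance convention: whether a given lattice map corresponds to restriction or to corestriction. This depends on the duality used in \cite{retforms}, which passes from \(H^1(-,M)\) to Brauer groups via \(H^2(-,\mathbb Z)\) and \(E^\times\). I would check it on the simplest piece \(\mathbb Z\to\mathbb Z[G/H]\), \(1\mapsto\) norm element, confirming it gives \(\operatorname{Cor}\). The signs (e.g.\ \(+\) versus \(-\) in the \(\operatorname{Res}\) sum) can be absorbed by replacing \(Q\) with \(-Q\), since this does not change the subgroup.

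\textbf{Step 3: torsion bounds.} Finally, derive the torsion statements from the relations.
\begin{itemize}
\item Apply \(\operatorname{Res}_{K/k}\circ\operatorname{Cor}_{K/k}\), which on \(\Br(K)\) equals \(B+\bar B\) with \(\bar B\) the Galois conjugate. Combined with the compatibility relation this is less direct, so I would instead use the \(KL\) relation.
\item Apply \(\operatorname{Cor}_{KL/K}\) to it. By the projection formula, \(\operatorname{Cor}_{KL/K}\operatorname{Res}_{KL/K}(B)=(n+1)B\). By the base-change formula for the Cartesian square \(KL=K\otimes_kL\), \(\operatorname{Cor}_{KL/K}\operatorname{Res}_{KL/L}(Q)=\operatorname{Res}_{K/k}\operatorname{Cor}_{L/k}(Q)=0\). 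Hence \((n+1)B=0\).
\item Symmetrically, applying \(\operatorname{Cor}_{KL/L}\) gives \(2Q+\operatorname{Res}_{L/k}\operatorname{Cor}_{K/k}(B)=0\), so \(2Q=0\).
\end{itemize}
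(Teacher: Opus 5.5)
Your proposal is correct and follows essentially the same route as the paper: feed \(FP_{Q_1}\xrightarrow{FP_\Psi}FP_{Q_0}\to F\to 0\) into \cite[Theorem~6.1]{retforms}, read the projection components of \(\Psi\) as restrictions and the norm components as corestrictions, and get \((n+1)B=0\) and \(2Q=0\) by applying \(\operatorname{Cor}_{KL/K}\) and \(\operatorname{Cor}_{KL/L}\) with the base-change formula for \(KL\simeq K\otimes_k L\). Your dimension count via \(H_KH_L=G\) is a slightly more explicit version of the paper's argument for \(KL\simeq K\otimes_k L\). Your remark that a sign flip in the \(\operatorname{Res}\) relation leaves the subgroup unchanged is also right, but the reason is that every such pair satisfies \(2Q=0\) (your Step~3 gives this in either sign convention), not merely the substitution \(Q\mapsto -Q\).
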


\begin{proof}
First, by the Galois correspondence,
\[
[K:k]=[G:H_K]=2,
\qquad
[L:k]=[G:H_L]=n+1.
\]
Also
\[
H_K\cap H_L=H_{KL},
\qquad
\langle H_K,H_L\rangle=G.
\]
Hence
\[
E^{H_K}E^{H_L}=E^{H_K\cap H_L}=E^{H_{KL}}
\]
and
\[
E^{H_K}\cap E^{H_L}=E^G=k.
\]
Therefore
\[
KL=E^{H_{KL}}\simeq K\otimes_k L.
\]

Now apply \cite[Theorem~6.1]{retforms} to the permutation-projective
presentation
\[
FP_{Q_1}
\xrightarrow{\,FP_{\Psi}\,}
FP_{Q_0}
\longrightarrow
F
\longrightarrow 0.
\]
The summands of \(Q_0\) are
\[
\mathbb Z[S_2]\cong \mathbb Z[G/H_K],
\qquad
\mathbb Z[S_{n+1}/S_n]\cong \mathbb Z[G/H_L],
\]
so they give the source Brauer groups
\[
\Br(E/K)\oplus \Br(E/L).
\]
The summands of \(Q_1\) are
\[
\mathbb Z[S_{n+1}/S_n]\otimes_{\mathbb Z}\mathbb Z[S_2]
\cong \mathbb Z[G/H_{KL}]
\]
and two copies of \(\mathbb Z[G/G]\). Hence they give the target groups
\[
\Br(E/KL)\oplus \Br(E/k)^{\oplus 2}.
\]
Under this identification, the map induced by \(\Psi\) is
\[
(B,Q)\longmapsto
\Bigl(
\operatorname{Res}_{KL/K}(B)+\operatorname{Res}_{KL/L}(Q),\,
\operatorname{Cor}_{K/k}(B),\,
\operatorname{Cor}_{L/k}(Q)
\Bigr).
\]
Thus
\[
H^1(k,T)\cong
\ker\!\left(
\Br(E/K)\oplus \Br(E/L)
\longrightarrow
\Br(E/KL)\oplus \Br(E/k)^{\oplus 2}
\right).
\]
Since
\[
\Br(E/K)=\ker\bigl(\Br(K)\to\Br(E)\bigr),
\qquad
\Br(E/L)=\ker\bigl(\Br(L)\to\Br(E)\bigr),
\]
this is exactly the stated description in terms of pairs
\[
(B,Q)\in \Br(K)\times \Br(L).
\]

It remains to prove the torsion statements. Since
\[
[KL:K]=n+1,
\qquad
[KL:L]=2,
\]
the relation
\[
\operatorname{Res}_{KL/K}(B)+\operatorname{Res}_{KL/L}(Q)=0
\]
gives, after applying \(\operatorname{Cor}_{KL/K}\),
\[
(n+1)B
=
-\operatorname{Cor}_{KL/K}\operatorname{Res}_{KL/L}(Q).
\]
Because \(KL\simeq K\otimes_k L\), restriction-corestriction compatibility gives
\[
\operatorname{Cor}_{KL/K}\operatorname{Res}_{KL/L}(Q)
=
\operatorname{Res}_{K/k}\operatorname{Cor}_{L/k}(Q)=0.
\]
Hence
\[
(n+1)B=0.
\]
Similarly, applying \(\operatorname{Cor}_{KL/L}\) gives
\[
2Q
=
-\operatorname{Cor}_{KL/L}\operatorname{Res}_{KL/K}(B)
=
-\operatorname{Res}_{L/k}\operatorname{Cor}_{K/k}(B)=0.
\]
Thus
\[
(n+1)B=0,
\qquad
2Q=0.
\]
\end{proof}

\section{Degree-zero source fields}

\subsection{The distinguished degree-zero summands}

In this section we compare the distinguished degree-zero permutation term with
other possible degree-zero terms in permutation-projective presentations of
\[
F=H^1(-,M).
\]
We work after \(p\)-adic completion. At \(p=2\), the relevant summand is the
\(\mathbb Z_2G\)-lattice \(A_2\). For each odd prime \(p\mid(n+1)\), the
relevant summand is the \(\mathbb Z_pG\)-lattice \(\mathbb Z_p^-\). We determine
when these occur as direct summands of completed permutation lattices.

Let
\[
\Omega:=\{0,\dots,n\}.
\]
Recall that \(S_{n-1}\le S_{n+1}\) is the pointwise stabilizer of two points.
Since \(n+1\) is odd, the integer \(n-1\) is also odd. We choose
\[
P_1\in \operatorname{Syl}_2(S_{n-1})
\]
so that \(P_1\) fixes exactly the three points
\[
\{n-2,n-1,n\}.
\]

We use the Brauer--Green correspondence for \(p\)-permutation modules: an
indecomposable trivial-source summand with vertex \(P\) is detected by the
corresponding indecomposable projective summand of the Brauer quotient at \(P\);
see, for example, \cite{Broue1985ScottMP,Lassueur2023}.

\begin{lemma}[Vertices of the degree-zero summands]
\label{lem:vertices-degree-zero-labels}
The following hold.

\begin{enumerate}[label=\textup{(\roman*)}]
\item As a \(\mathbb Z_2G\)-module, \(A_2\) has vertex
\[
P_1\times S_2.
\]
Moreover, after reduction modulo \(2\), its Green correspondent is inflated from
the \(2\)-dimensional simple \(\mathbb F_2S_3\)-module \(D\), where
\[
S_3=\operatorname{Sym}\{n-2,n-1,n\}.
\]

\item For every odd prime \(p\mid(n+1)\), the \(\mathbb Z_pG\)-module
\(\mathbb Z_p^-\) has vertex a Sylow \(p\)-subgroup of \(G\).
\end{enumerate}
\end{lemma}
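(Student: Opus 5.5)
The plan is to treat both parts through the Brauer quotient, as the paragraph before the lemma suggests. For a trivial-source (i.e.\ $p$-permutation) $\mathbb Z_pG$-lattice $N$ and a $p$-subgroup $Q\le G$, write $\overline N(Q)$ for the Brauer quotient of $\overline N:=N\otimes\mathbb F_p$. It is an $\mathbb F_p[N_G(Q)/Q]$-module. For a permutation lattice $\mathbb Z_p[X]$ it equals $\mathbb F_p[X^Q]$. The Brauer--Green correspondence then says: indecomposable summands of $N$ with vertex exactly $Q$ correspond bijectively to indecomposable projective summands of $\overline N(Q)$ as an $\mathbb F_p[N_G(Q)/Q]$-module. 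Moreover, $Q$ is contained in a vertex of $N$ whenever $\overline N(Q)\neq 0$.

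\emph{Part (ii).} The lattice $\mathbb Z_p^-$ is a rank-one summand of $\mathbb Z_p[S_2]=\mathbb Z_p[G/(S_{n+1}\times 1)]$, so it is trivial-source. Let $P$ be a Sylow $p$-subgroup of $G$. Since $p$ is odd, $P\le S_{n+1}\times 1$. The factor $S_{n+1}$ acts trivially on $\mathbb Z_p^-$, so $P$ acts trivially on $\overline{\mathbb Z_p^-}$ and $\overline{\mathbb Z_p^-}(P)=\mathbb F_p^-\neq 0$. Hence $P$ is contained in a vertex, and the vertex is $P$. As an independent check, Green's divisibility theorem says $[P:Q]$ divides the rank $1$ for a vertex $Q\le P$, which forces $Q=P$.

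\emph{Part (i).} Here $p=2$. The lattice $\mathbb Z_2[S_{n+1}/S_n]$, with $S_2$ acting trivially, is $\mathbb Z_2[G/(S_n\times S_2)]$. By Lemma~\ref{lem:two-local-lattices} it equals $\mathbb Z_2\oplus A_2$ with both summands indecomposable. So $\overline{\mathbb Z_2[\Omega]}(Q)=\mathbb F_2[\Omega^Q]$ splits as the contributions of the two summands, and I will find the vertex of $A_2$ by locating a $Q$ at which a projective summand appears that does not come from $\mathbb Z_2$.

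First I handle the trivial summand. Its vertex is a Sylow $2$-subgroup $S\times S_2$ of $G$, with $S\in\operatorname{Syl}_2(S_n)$, because $[G:S_n\times S_2]=n+1$ is odd. At $S\times S_2$ the fixed set $\Omega^{S}$ is the single point $n$: indeed $n$ is even, so $S$ has no fixed points on $\{0,\dots,n-1\}$. The Brauer quotient there is $\mathbb F_2$, so it is accounted for entirely by $\mathbb Z_2$.

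Next take $Q=P_1\times S_2$. By the choice of $P_1$, $\Omega^{Q}=\{n-2,n-1,n\}$, so $\overline{\mathbb Z_2[\Omega]}(Q)=\mathbb F_2[\{n-2,n-1,n\}]$. I then need to compute $N_G(Q)/Q$. The normalizer $N_G(Q)$ preserves $\Omega^{Q}$, and since $P_1$ is Sylow in $S_{n-1}$ it contains $\operatorname{Sym}\{n-2,n-1,n\}\times S_2$ together with $N_{S'}(P_1)$, where $S'=\operatorname{Sym}(\Omega\setminus\{n-2,n-1,n\})$. This gives
\[
N_G(Q)/Q\cong S_3\times \bigl(N_{S'}(P_1)/P_1\bigr),
\]
and the second factor has odd order because $P_1$ is Sylow in $S'$. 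This normalizer computation, and the check that $P_1$ fixes exactly three points, is where I expect the main care is needed. Given it, the module is inflated from the permutation module $\mathbb F_2[S_3/S_2]\cong\mathbb F_2\oplus D$. Here $D$ is the $2$-dimensional simple module, which lies in a block of defect zero and is therefore projective, while $\mathbb F_2$ is not projective for $S_3$. Over the odd-order factor everything is projective, so $D$ inflated is projective as an $\mathbb F_2[N_G(Q)/Q]$-module, whereas the trivial summand is not.

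Hence $\mathbb Z_2[\Omega]$ has an indecomposable summand with vertex $P_1\times S_2$ and Green correspondent the inflation of $D$. This summand is not $\mathbb Z_2$, whose vertex is a full Sylow subgroup. By the indecomposability of $A_2$, it must be $A_2$, which proves (i).
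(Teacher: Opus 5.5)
Your proof is correct. Part (ii) is essentially the paper's argument: the Brauer quotient is nonzero at a Sylow $p$-subgroup. The Green-divisibility check you add is harmless but not needed.

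Part (i) takes a genuinely different route.

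\emph{The paper's route.} It computes $\overline{A_2}(Q)$ for \emph{every} $2$-subgroup $Q\le G$. Since the projection $Q_0$ of $Q$ to $S_{n+1}$ always fixes a point of $\Omega$, every non-fixed orbit sum is a transfer. Hence $\overline{A_2}(Q)$ is the sum-zero part of $\mathbb F_2[\Omega^{Q_0}]$, and it is nonzero exactly when $Q_0$ fixes at least two points. The vertex is then read off as a maximal such $Q$, namely a conjugate of $P_1\times S_2$, and the Brauer quotient there is $D$.

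\emph{Your route.} You work only at $Q=P_1\times S_2$, and with the whole module $\mathbb F_2[\Omega]$ rather than $\overline{A_2}$. You compute $N_G(Q)/Q\cong S_3\times\bigl(N_{S'}(P_1)/P_1\bigr)$ with the second factor of odd order. You then note that $\mathbb F_2[\Omega^Q]\cong\mathbb F_2\oplus D$ has exactly one projective indecomposable summand, because $D$ has defect zero. Finally, you identify the corresponding summand of vertex $Q$ with $A_2$ by elimination against $\mathbb Z_2[\Omega]\cong\mathbb Z_2\oplus A_2$.

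\emph{Comparison.} Your route avoids classifying all $Q$ with nonzero Brauer quotient. It also verifies explicitly that the Brauer quotient at the vertex is projective, which the paper's identification of the Green correspondent uses only tacitly. The cost is a different input. You need the multiplicity statement in Brou\'e's correspondence: summands whose vertex strictly contains $Q$ contribute no projective summands to the Brauer quotient at $Q$. The paper instead uses only the criterion that vertices are the maximal $Q$ with $U(Q)\neq 0$, and as a by-product it gets the full vanishing locus of $\overline{A_2}(-)$.

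One small point of phrasing. $\operatorname{Sym}\{n-2,n-1,n\}$ normalizes $P_1$ because it centralizes it (the supports are disjoint), not because $P_1$ is Sylow in $S_{n-1}$. Together with the fact that $N_{S_{n+1}}(P_1)$ stabilizes $\Omega^{P_1}$, this gives
$N_{S_{n+1}}(P_1)=N_{S'}(P_1)\times\operatorname{Sym}\{n-2,n-1,n\}$, where $S'=\operatorname{Sym}(\Omega\setminus\{n-2,n-1,n\})$. That equality is exactly the normalizer structure you need.
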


\begin{proof}
For a \(p\)-subgroup \(Q\leq G\), recall that the Brauer quotient of a
\(\mathbb Z_pG\)-lattice \(U\) at \(Q\) is
\[
U(Q):=
U^Q\Big/
\left(
\sum_{R<Q}\operatorname{Tr}_R^Q(U^R)+pU^Q
\right),
\]
viewed as an \(\mathbb F_p[N_G(Q)/Q]\)-module. We use the Brauer-quotient
criterion for vertices of \(p\)-permutation modules: if \(U\) is indecomposable,
then its vertices are the maximal \(p\)-subgroups \(Q\) such that \(U(Q)\neq 0\)
\cite[Proposition~4.12]{Lassueur2023}.

For \textup{(i)}, let
\[
\overline A_2=\mathbb F_2\otimes_{\mathbb Z_2}A_2.
\]
By definition, \(\overline A_2\) is the kernel of the sum-of-coefficients map
\[
\mathbb F_2[\Omega]\longrightarrow\mathbb F_2,
\]
and the \(S_2\)-factor acts trivially. Let \(Q\leq G\) be a \(2\)-subgroup, and
let \(Q_0\leq S_{n+1}\) be its projection. Since \(n+1\) is odd, \(Q_0\) has a
fixed point on \(\Omega\). In the Brauer quotient, every non-fixed
\(Q_0\)-orbit is killed by transfer: if \(O\) is such an orbit, \(x\in O\), and
\(y\in\Omega^{Q_0}\), then the orbit sum over \(O\) is the transfer of
\([x]+[y]\), because \(|O|\) is even in characteristic \(2\). Hence only the
fixed points survive, and \(\overline A_2(Q)\) is naturally the kernel of
\[
\mathbb F_2[\Omega^{Q_0}]\longrightarrow\mathbb F_2.
\]
This kernel is nonzero exactly when \(Q_0\) fixes at least two points.

Thus the maximal \(2\)-subgroups with nonzero Brauer quotient are obtained by
taking a Sylow \(2\)-subgroup of the pointwise stabilizer of two points in
\(S_{n+1}\), together with the central \(S_2\)-factor. These are conjugate to
\[
P_1\times S_2.
\]
Therefore \(A_2\) has vertex \(P_1\times S_2\).

By the choice of \(P_1\), its fixed-point set is
\[
\{n-2,n-1,n\}.
\]
At the vertex \(P_1\times S_2\), the Brauer quotient is the kernel of
\[
\mathbb F_2[\{n-2,n-1,n\}]\longrightarrow\mathbb F_2.
\]
The normalizer quotient acts on these three points through
\[
S_3=\operatorname{Sym}\{n-2,n-1,n\},
\]
and this kernel is the \(2\)-dimensional simple \(\mathbb F_2S_3\)-module
\(D\). Hence the Green correspondent is inflated from \(D\)
\cite[Theorem~4.6]{Lassueur2023}.

For \textup{(ii)}, let \(p\mid(n+1)\) be odd. The sign character of \(S_2\) is
trivial on \(p\)-subgroups, so \(\mathbb Z_p^-\) has nonzero Brauer quotient at
every \(p\)-subgroup of \(G\). The maximal such subgroups are the Sylow
\(p\)-subgroups. Since \(S_2\) has no \(p\)-part, such a vertex has the form
\[
P\times 1,
\]
with \(P\in\operatorname{Syl}_p(S_{n+1})\).
\end{proof}

\subsection{Admissible stabilizers and local summand criteria}

Recall that \(P_1\le S_{n+1}\) is a Sylow \(2\)-subgroup of the pointwise
stabilizer of two points, chosen so that
\[
\Omega^{P_1}=\{n-2,n-1,n\}.
\]

\begin{definition}[Admissible stabilizers]
\label{def:admissible-stabilizers}
Let \(H\le G\).

\begin{enumerate}[label=\textup{(\roman*)}]
\item We say that \(H\) is \emph{\(2\)-admissible} if \(H=J\times S_2\) for some \(J\le S_{n+1}\), and there exists \(g\in S_{n+1}\) such that, writing \(P:={}^{g}P_1,\)
one has \(P\le J\) and \(N_J(P)\) fixes a point of \(\Omega^P\).

\item Let \(p\mid(n+1)\) be odd. We say that \(H\) is
\emph{\(p\)-admissible} if \(H\le S_{n+1}\times 1\) and \(H\) contains a Sylow \(p\)-subgroup of \(G\).
\end{enumerate}
\end{definition}

\begin{prop}[The \(2\)-primary direct-summand criterion]
\label{prop:two-primary-local-criterion}
Let \(H\le G\). Then
\[
A_2\mid \mathbb Z_2[G/H]
\]
if and only if \(H\) is \(2\)-admissible.
\end{prop}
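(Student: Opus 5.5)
We use the Brauer quotient criterion together with the vertex and Green correspondent data of Lemma~\ref{lem:vertices-degree-zero-labels}(i). Write \(V:=P_1\times S_2\) and \(\overline{N}:=N_G(V)/V\). By the Brauer--Green correspondence for \(p\)-permutation modules, the multiplicity of \(A_2\) as a summand of \(\mathbb Z_2[G/H]\) equals the multiplicity of the projective cover of the Green correspondent, \(\operatorname{Inf}D\), as a summand of the Brauer quotient \(\mathbb F_2[G/H](V)\). This Brauer quotient is \(\mathbb F_2[(G/H)^V]\) as a permutation \(\mathbb F_2\overline N\)-module. So the plan is to compute \((G/H)^V\) as an \(\overline N\)-set and decide when \(\operatorname{Inf}D\), which is projective over the relevant quotient, occurs in it.

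\emph{Step 1: the \(S_2\)-factor.} The factor \(S_2\) is central in \(G\) and lies in \(V\). A coset \(gH\) is fixed by \(1\times S_2\) exactly when \(1\times S_2\le {}^gH=H^g\), that is, when \(1\times S_2\le H\). If \(H\) does not contain \(1\times S_2\), then \((G/H)^V=\emptyset\) and \(A_2\nmid\mathbb Z_2[G/H]\). Otherwise \(H=J\times S_2\) with \(J\le S_{n+1}\), because \(S_2\) is central and a direct factor. We then reduce to the \(S_{n+1}\)-set \(S_{n+1}/J\) with \(P_1\) in place of \(V\), and \(\overline N\) becomes \(N_{S_{n+1}}(P_1)/P_1\). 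This group acts on \(\Omega^{P_1}=\{n-2,n-1,n\}\) through \(S_3\), with a kernel \(C\) acting on the remaining orbits of \(P_1\).

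\emph{Step 2: orbit decomposition.} The fixed set \((S_{n+1}/J)^{P_1}\) consists of the cosets \(gJ\) with \(P_1\le {}^gJ\). Its \(N(P_1)\)-orbits correspond to \(J\)-conjugacy classes of conjugates \(P={}^{g^{-1}}P_1\le J\). The orbit of such a coset is \(N(P_1)/N_{{}^gJ}(P_1)\), which we transport to \(N_J(P)\) inside \(N(P)\). So \(\mathbb F_2[(G/H)^V]\) is a direct sum of modules \(\mathbb F_2[\overline N/\overline{N_J(P)}]\), one for each \(J\)-class of Sylow-type conjugates \(P\le J\). Since \(D\) is a projective simple over \(S_3\), the key task is the following: for a subgroup \(X\le \overline N\) (the image of \(N_J(P)\)), decide when \(\operatorname{Inf}D\mid\mathbb F_2[\overline N/X]\). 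The orbits on \(\Omega^{P}\) should matter only through the image of \(X\) in \(S_3\), and the kernel \(C\) should be handled by the observation that \(\operatorname{Inf}D\) has \(C\) in its kernel. For \(S_3\) itself, \(\mathbb F_2[S_3/Y]\) contains \(D\) exactly when \(Y\) has odd order, which forces \(Y\) to be trivial, \(C_2\), or \(C_3\)... but with \(Y=C_3\) we get \(\mathbb F_2[S_3/C_3]=\mathbb F_2[C_2]\), which does not contain \(D\). The correct condition is therefore that \(Y\) is contained in a point stabilizer \(C_2\) or is trivial, i.e., that \(Y\) fixes a point of \(\Omega^P\). This is exactly the condition in Definition~\ref{def:admissible-stabilizers}(i).

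\emph{Main obstacle.} The delicate step is passing from \(\overline N\) to \(S_3\). We must show that \(\operatorname{Inf}D\) is a summand of \(\mathbb F_2[\overline N/X]\) if and only if it is a summand of \(\mathbb F_2[S_3/\bar X]\), where \(\bar X\) is the image of \(X\). One direction is clear by taking \(C\)-coinvariants, since \(\operatorname{Inf}D\) is trivial on \(C\). For the other direction, we first check that \(C\) has odd order, being the quotient by \(P_1\) of the normalizer of a Sylow \(2\)-subgroup of \(S_{n-1}\). Granting this, \(\mathbb F_2[\overline N/X]\) decomposes according to the \(C\)-fixed part, and the \(C\)-invariant summand is \(\mathbb F_2[S_3/\bar X]\) inflated. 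If \(|C|\) turns out to be even, we would instead argue directly via the Brauer-quotient multiplicity formula of \cite{Lassueur2023}. Once the equivalence holds, combining it over all \(J\)-classes of \(P\) gives the criterion: \(A_2\mid\mathbb Z_2[G/H]\) if and only if \(H=J\times S_2\) and, for some \(P={}^gP_1\le J\), the group \(N_J(P)\) fixes a point of \(\Omega^P\).
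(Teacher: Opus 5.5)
Your proposal is correct and takes essentially the same route as the paper: you compute the Brauer quotient at the vertex $P_1\times S_2$, decompose $\mathbb F_2[(S_{n+1}/J)^{P_1}]$ into modules $\mathbb F_2[\overline N/\overline{N_J(P)}]$, and reduce to the criterion that the image of $N_J(P)$ in $\operatorname{Sym}(\Omega^P)\cong S_3$ fixes a point. Two small clean-ups are needed: drop the ``odd order'' remark about $Y$, since the correct reason is that $D$ is simple and projective, so $D\mid\mathbb F_2[S_3/Y]$ if and only if $D^Y\neq 0$, that is, if and only if $Y$ fixes a point; and your ``main obstacle'' is vacuous, because $C=N_{\operatorname{Sym}(\Omega\setminus\Omega^{P_1})}(P_1)/P_1$ is a $2'$-group ($P_1$ is Sylow in its normalizer), and in fact $C=1$ since Sylow $2$-subgroups of symmetric groups are self-normalizing, so $N_G(V)/V\cong S_3$.
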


\begin{proof}
Suppose first that
\[
A_2\mid \mathbb Z_2[G/H].
\]
By Lemma~\ref{lem:vertices-degree-zero-labels}, \(A_2\) has vertex
\[
P_1\times S_2.
\]
Hence \(P_1\times S_2\) is contained in a conjugate of \(H\). Since
\(1\times S_2\) is central in \(G\), this implies
\[
1\times S_2\leq H.
\]
Thus
\[
H=J\times S_2,
\qquad
J=\operatorname{pr}_1(H)\leq S_{n+1}.
\]

For such \(H\), the \(G\)-set \(G/H\) is the inflation of the
\(S_{n+1}\)-set \(S_{n+1}/J\). Thus
\[
\mathbb Z_2[G/H]
\cong
\operatorname{Inf}_{S_{n+1}}^G
\mathbb Z_2[S_{n+1}/J].
\]
By idempotent lifting, it is enough to test the corresponding summand after
reduction modulo \(2\).

Let
\[
S=S_{n+1},
\qquad
P=P_1,
\qquad
N=N_S(P),
\qquad
\overline N=N/P.
\]
For a permutation module \(\mathbb F_2[X]\), the Brauer quotient at \(P\) is
\[
\mathbb F_2[X^P],
\]
as a module for \(N/P\): the non-fixed \(P\)-orbits are killed by relative
transfers, while the \(P\)-fixed points survive. Applying this to \(X=S/J\), a
coset \(gJ\) is \(P\)-fixed exactly when
\[
P\leq {}^gJ.
\]
The group \(N\) acts on these fixed cosets, and the stabilizer of \(gJ\) in
\(N\) is
\[
N\cap {}^gJ.
\]
Therefore \(\mathbb F_2[S/J](P)\) is the direct sum, over representatives
\(gJ\) of the \(N\)-orbits in \((S/J)^P\), of the permutation modules
\[
\mathbb F_2[\overline N/\overline J_g],
\]
where
\[
\overline J_g=(N\cap {}^gJ)/P.
\]

By the Brauer-Green correspondence for \(p\)-permutation modules
\cite[Theorem~4.6]{Lassueur2023}, the summand with vertex \(P_1\) and Green
correspondent \(D\) occurs precisely when \(D\) occurs in the Brauer quotient at
\(P_1\). Let \(R_g\) be the image of \(\overline J_g\) in
\[
\operatorname{Sym}(\Omega^P)
=
\operatorname{Sym}\{n-2,n-1,n\}
\cong S_3.
\]
Since
\[
\mathbb F_2[\overline N/\overline J_g]
\cong
\operatorname{Ind}_{\overline J_g}^{\overline N}\mathbb F_2,
\]
Frobenius reciprocity gives
\[
D\mid \mathbb F_2[\overline N/\overline J_g]
\]
if and only if
\[
D^{\overline J_g}\neq 0,
\]
equivalently
\[
D^{R_g}\neq 0.
\]
Here \(D\) is inflated from the \(2\)-dimensional simple
\(\mathbb F_2S_3\)-module, namely the kernel of
\[
\mathbb F_2[\Omega^P]\to\mathbb F_2.
\]
Its \(R_g\)-fixed space is nonzero exactly when \(R_g\) fixes a point of
\(\Omega^P\): if \(R_g\) fixes a point, there is a nonzero fixed vector of
coefficient sum zero, while if \(R_g\) is transitive, the only fixed vectors in
\(\mathbb F_2[\Omega^P]\) are multiples of the full orbit sum, which does not
have coefficient sum zero.

Conjugating back, this is exactly the condition that, for
\[
P':={}^{g^{-1}}P_1\leq J,
\]
the subgroup \(N_J(P')\) fixes a point of \(\Omega^{P'}\). Hence \(H\) is
\(2\)-admissible.

Conversely, suppose \(H\) is \(2\)-admissible. Then
\[
H=J\times S_2
\]
for some \(J\leq S_{n+1}\), and there exists a conjugate
\[
P'={}^{g^{-1}}P_1\leq J
\]
such that \(N_J(P')\) fixes a point of \(\Omega^{P'}\). Equivalently, after
conjugating by \(g\), the image of
\[
(N\cap {}^gJ)/P_1
\]
in
\[
\operatorname{Sym}(\Omega^{P_1})\cong S_3
\]
fixes a point. Hence \(D\) occurs in the Brauer quotient of
\[
\mathbb F_2[S_{n+1}/J]
\]
at \(P_1\). By the Brauer-Green correspondence and idempotent lifting, the
corresponding \(\mathbb Z_2G\)-summand is \(A_2\). Therefore
\[
A_2\mid \mathbb Z_2[G/H].
\]
\end{proof}

\begin{prop}[Odd-primary direct-summand criterion]
\label{prop:odd-primary-local-criterion}
Let \(p\mid(n+1)\) be odd, and let \(H\le G\). Then
\[
\mathbb Z_p^-\mid \mathbb Z_p[G/H]
\]
if and only if \(H\) is \(p\)-admissible.
\end{prop}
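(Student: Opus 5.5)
The plan is to argue directly with homomorphisms rather than through vertices: \(\mathbb Z_p^-\) has rank one, so being a direct summand of \(\mathbb Z_p[G/H]\) comes down to a single composite of two maps being a unit. Let \(\chi:G\to\{\pm1\}\subset\mathbb Z_p^\times\) be the character of \(\mathbb Z_p^-\), namely the composite of the projection \(G\to S_2\) with the sign. Then \(\mathbb Z_p^-\mid\mathbb Z_p[G/H]\) holds if and only if there are \(G\)-maps
\[
\iota:\mathbb Z_p^-\to\mathbb Z_p[G/H],
\qquad
\pi:\mathbb Z_p[G/H]\to\mathbb Z_p^-
\]
with \(\pi\circ\iota\in\mathbb Z_p^\times\), since \(\operatorname{End}_{\mathbb Z_pG}(\mathbb Z_p^-)=\mathbb Z_p\).

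First I compute the two Hom groups by Frobenius reciprocity. We have
\[
\operatorname{Hom}_G(\mathbb Z_p[G/H],\mathbb Z_p^-)\cong\operatorname{Hom}_H(\mathbb Z_p,\mathbb Z_p^-),
\]
which is \(\mathbb Z_p\) if \(\chi|_H=1\) and \(0\) otherwise; the latter because \(\chi(h)=-1\) forces \(2x=0\), hence \(x=0\). Dually, since \(\mathbb Z_p[G/H]\) is self-dual (equivalently, \(\operatorname{Ind}\) is also right adjoint to restriction for finite groups),
\[
\operatorname{Hom}_G(\mathbb Z_p^-,\mathbb Z_p[G/H])\cong\operatorname{Hom}_H(\mathbb Z_p^-,\mathbb Z_p),
\]
which has the same description. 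Now \(\chi|_H=1\) means exactly that \(H\le\ker\chi=S_{n+1}\times 1\). So if \(H\not\le S_{n+1}\times1\), both Hom groups vanish and \(\mathbb Z_p^-\) is not a summand.

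When \(H\le S_{n+1}\times 1\), I write down generators explicitly:
\[
\pi(gH)=\chi(g),
\qquad
\iota(1)=\sum_{gH\in G/H}\chi(g)\,gH.
\]
Both are well defined because \(\chi\) is trivial on \(H\), and both are \(G\)-equivariant by a short check. Since each Hom group is free of rank one over \(\mathbb Z_p\), these maps generate it, up to the identification above. So every composite \(\pi'\circ\iota'\) is a \(\mathbb Z_p\)-multiple of
\[
\pi\circ\iota=\sum_{gH}\chi(g)^2=[G:H].
\]
Hence \(\mathbb Z_p^-\) is a summand if and only if \([G:H]\in\mathbb Z_p^\times\), that is, \(p\nmid[G:H]\). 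This holds exactly when \(H\) contains a Sylow \(p\)-subgroup of \(G\).

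The only point needing care is the claim that \(\iota\) and \(\pi\) generate their Hom groups; otherwise the composite might in principle be a proper divisor of \([G:H]\). Transitivity of \(G/H\) settles this. A \(G\)-map \(\mathbb Z_p[G/H]\to\mathbb Z_p^-\) is determined by the image of the coset \(H\), which can be arbitrary in \(\mathbb Z_p\). A \(G\)-map \(\mathbb Z_p^-\to\mathbb Z_p[G/H]\) sends \(1\) to a vector \(v\) with \(gv=\chi(g)v\). Such a vector has coefficients \(c_{gH}=\chi(g)c_H\), so it is \(c_H\) times the vector \(\iota(1)\) above. This gives the rank-one generation and completes the argument. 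As a consistency check, the result agrees with Lemma~\ref{lem:vertices-degree-zero-labels}(ii): the vertex of \(\mathbb Z_p^-\) is a Sylow \(p\)-subgroup of \(G\), which must then lie in a conjugate of \(H\).
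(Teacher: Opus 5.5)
Your proof is correct. The sufficiency half is the same as the paper's: the maps $gH\mapsto\operatorname{sgn}(g)$ and $1\mapsto\sum_{gH}\operatorname{sgn}(g)\,gH$, whose composite is $[G:H]$.

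The routes differ in the necessity direction. The paper uses Frobenius reciprocity only to force $H\le S_{n+1}\times 1$. It then gets the Sylow condition from vertex theory: by Lemma~\ref{lem:vertices-degree-zero-labels}(ii), the vertex of $\mathbb Z_p^-$ is a Sylow $p$-subgroup of $G$. Any summand of $\mathbb Z_p[G/H]$ is relatively $H$-projective, so that vertex must lie in a conjugate of $H$.

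You instead show that both Hom groups are free of rank one and are generated by your explicit maps. Your coefficient check $c_{gH}=\chi(g)c_H$ is exactly the step needed here, since rank one alone would not rule out $\iota$ being a proper multiple of a generator. It follows that every composite lies in $[G:H]\mathbb Z_p$, so a splitting forces $p\nmid[G:H]$.

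Your route is elementary and self-contained. It avoids Brauer quotients and relative projectivity, and it never uses the hypothesis $p\mid(n+1)$. The same computation gives the criterion ``$\chi|_H=1$ and $p\nmid[G:H]$'' for any rank-one lattice defined by a character of $G$.

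The paper's vertex-theoretic route is chosen to run parallel to the $2$-primary criterion (Proposition~\ref{prop:two-primary-local-criterion}). There the relevant summand $A_2$ has rank $n$, so a rank-one Hom computation of your kind is not available.
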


\begin{proof}
Suppose first that
\[
\mathbb Z_p^-\mid \mathbb Z_p[G/H].
\]
Then
\[
\operatorname{Hom}_{\mathbb Z_pG}
(\mathbb Z_p^-,\mathbb Z_p[G/H])
\neq 0.
\]
By Frobenius reciprocity,
\[
\operatorname{Hom}_{\mathbb Z_pG}
(\mathbb Z_p^-,\mathbb Z_p[G/H])
\cong
\operatorname{Hom}_{\mathbb Z_pH}(\mathbb Z_p^-,\mathbb Z_p).
\]
The right-hand side is nonzero only if the sign character is trivial on \(H\).
Hence
\[
H\leq S_{n+1}\times 1.
\]
Write
\[
H=H_0\times 1,
\qquad
H_0\leq S_{n+1}.
\]
By Lemma~\ref{lem:vertices-degree-zero-labels}, \(\mathbb Z_p^-\) has vertex a
Sylow \(p\)-subgroup of \(G\), namely one of the form
\[
P\times 1,
\qquad
P\in \operatorname{Syl}_p(S_{n+1}).
\]
Since \(\mathbb Z_p^-\) is a direct summand of \(\mathbb Z_p[G/H]\), this
vertex is contained in a conjugate of \(H\). Therefore \(H_0\) contains a Sylow
\(p\)-subgroup of \(S_{n+1}\), equivalently
\[
p\nmid [S_{n+1}:H_0].
\]
Thus \(H\) is \(p\)-admissible.

Conversely, suppose \(H\) is \(p\)-admissible. Then
\[
H=H_0\times 1
\]
for some \(H_0\leq S_{n+1}\), and
\[
p\nmid [S_{n+1}:H_0].
\]
Since \(p\) is odd, this is equivalent to
\[
p\nmid [G:H].
\]
The sign character is trivial on \(H\), so define maps
\[
\mathbb Z_p^- \longrightarrow \mathbb Z_p[G/H],
\qquad
1\longmapsto \sum_{gH\in G/H} \operatorname{sgn}(g)\, gH,
\]
and
\[
\mathbb Z_p[G/H]\longrightarrow \mathbb Z_p^-,
\qquad
gH\longmapsto \operatorname{sgn}(g).
\]
Their composition is multiplication by \([G:H]\), which is a unit in
\(\mathbb Z_p\). Hence \(\mathbb Z_p^-\) splits off from
\(\mathbb Z_p[G/H]\). Therefore
\[
\mathbb Z_p^-\mid \mathbb Z_p[G/H].
\]
\end{proof}

\subsection{Classification of degree-zero source terms}

Let
\[
X_0=\bigsqcup_{i\in I}G/H_i
\]
be a finite permutation \(G\)-set, and write
\[
H^0(X_0):=H^0(-,\mathbb Z[X_0]).
\]

\begin{theorem}[Degree-zero permutation terms]
\label{thm:degree-zero-local-classification}
Let
\[
X_0=\bigsqcup_{i\in I}G/H_i
\]
be a finite permutation \(G\)-set. The following are equivalent:

\begin{enumerate}[label=\textup{(\alph*)}]
\item The cohomological Mackey functor
\[
H^0(-,\mathbb Z[X_0])
\]
can occur as the degree-zero term of a permutation presentation of
\[
H^1(-,M).
\]

\item The stabilizers \(H_i\) satisfy the following admissibility conditions:
\begin{enumerate}[label=\textup{(\roman*)}]
\item at least one \(H_i\) is \(2\)-admissible;

\item for every odd prime \(p\mid(n+1)\), at least one \(H_i\) is
\(p\)-admissible.
\end{enumerate}
\end{enumerate}
\end{theorem}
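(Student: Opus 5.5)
The plan is to reduce both directions to the local minimal presentations of Theorem~\ref{thm:completed-minimal-projective-presentation}, prime by prime, and then glue. The key general fact I will use is the following. For cohomological Mackey functors over \(\mathbb Z_p\), an epimorphism \(FP_{Q}\to F_p\) from a projective exists if and only if the projective cover \(FP_{(\mathcal P_0)_p}\) is a direct summand of \(FP_Q\). Since \(Q\mapsto FP_Q\) is fully faithful on direct summands of permutation lattices (Th\'evenaz--Webb), this is equivalent to \((\mathcal P_0)_p\mid Q\). Here \((\mathcal P_0)_p\) is indecomposable, equal to \(A_2\) for \(p=2\) and to \(\mathbb Z_p^-\) for odd \(p\mid(n+1)\), so by Krull--Schmidt over \(\mathbb Z_p\) the condition \((\mathcal P_0)_p\mid \mathbb Z_p[X_0]\) holds exactly when \((\mathcal P_0)_p\mid\mathbb Z_p[G/H_i]\) for some \(i\).

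For (a)\(\Rightarrow\)(b), suppose \(FP_{\mathbb Z[X_0]}\to F\) is surjective and tensor with \(\mathbb Z_p\). Fixed points commute with flat base change, and \(F\otimes\mathbb Z_p\cong F_p\) as in the proof of Theorem~\ref{thm:integral-first-differential-H0}, so \(FP_{\mathbb Z_p[X_0]}\to F_p\) is an epimorphism. Since \(F_p\) is nonzero for \(p=2\) and for odd \(p\mid(n+1)\), the projective cover property gives \((\mathcal P_0)_p\mid \mathbb Z_p[G/H_i]\) for some \(i\). Proposition~\ref{prop:two-primary-local-criterion} then turns this into 2-admissibility, and Proposition~\ref{prop:odd-primary-local-criterion} turns it into \(p\)-admissibility.

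For (b)\(\Rightarrow\)(a), the same two propositions give, for each relevant \(p\), a split inclusion \((\mathcal P_0)_p\hookrightarrow \mathbb Z_p[X_0]\), and hence a local epimorphism \(FP_{\mathbb Z_p[X_0]}\to FP_{(\mathcal P_0)_p}\to F_p\). I then need a single integral map \(FP_{\mathbb Z[X_0]}\to F\). Because \(FP_{\mathbb Z[X_0]}\) is projective, it suffices to give an element of \(F(H_i)=H^1(H_i,M)\) for each orbit, via Yoneda: \(\operatorname{Hom}(FP_{\mathbb Z[G/H]},F)\cong F(H)\). Since \(F(H)=\bigoplus_p F_p(H)\) is finite, I choose the \(p\)-primary components of these elements independently, using the local epimorphisms for \(p=2\) and odd \(p\mid(n+1)\) and zero elsewhere. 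The resulting map is surjective because surjectivity can be checked on each primary part, and \(F_p=0\) for the remaining primes.

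Finally, the kernel \(Z\) of \(FP_{\mathbb Z[X_0]}\to F\) must be covered by some \(FP_{\mathbb Z[X_1]}\) with \(X_1\) a permutation \(G\)-set, so that the sequence is a genuine permutation presentation. I will handle this by noting that every cohomological Mackey functor over \(\mathbb Z\) that is finitely generated is a quotient of \(FP_{\mathbb Z[Y]}\) for some finite \(G\)-set \(Y\), for instance \(Y=\bigsqcup_H G/H\) with multiplicities given by generators of \(Z(H)\). Here \(Z\) is finitely generated as a subfunctor of a finitely generated functor over the Noetherian ring \(\mathbb Z\).

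I expect the main obstacle to be the local-to-integral step: checking that the projective-cover criterion and Krull--Schmidt transfer correctly between \(\mathbb Z\) and \(\mathbb Z_p\), in particular that an integral surjection localizes to a surjection onto \(F_p\) and that independent primary choices glue. I will rely on the finiteness of \(F(H)\) and the primary decomposition already used in Theorem~\ref{thm:integral-first-differential-H0}.
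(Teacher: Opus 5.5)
Your proposal is correct and follows the same route as the paper. You reduce to the completed minimal degree-zero summands $A_2$ and $\mathbb Z_p^-$, pass to individual orbits by Krull--Schmidt over $\mathbb Z_p$, and then convert via Propositions~\ref{prop:two-primary-local-criterion} and~\ref{prop:odd-primary-local-criterion}. The only difference is that the paper cites Proposition~7.7 of \cite{retforms} for the local-to-global criterion, whereas you prove it directly: projective covers and full faithfulness of $FP$ give necessity, and Yoneda, independent choice of primary components, and covering the kernel by a permutation functor give sufficiency. This makes your argument self-contained.
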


\begin{proof}
The completed minimal projective presentation of \(H^1(-,M)\) has, at the prime
\(2\), degree-zero summand \(A_2\). For each odd prime \(p\mid(n+1)\), it has
degree-zero summand \(\mathbb Z_p^-\). By Proposition~7.7 of \cite{retforms}, a
permutation degree-zero term can occur precisely when, after completion at each
prime, it contains all indecomposable summands appearing in the completed
minimal degree-zero term.

Since
\[
X_0=\bigsqcup_{i\in I}G/H_i,
\]
the completed permutation lattice decomposes as the direct sum of the completed
lattices attached to the individual orbits \(G/H_i\). Thus the above condition
is equivalent to requiring at least one orbit \(G/H_i\) with
\[
A_2\mid \mathbb Z_2[G/H_i],
\]
and, for every odd prime \(p\mid(n+1)\), at least one orbit \(G/H_i\) with
\[
\mathbb Z_p^-\mid \mathbb Z_p[G/H_i].
\]
By Proposition~\ref{prop:two-primary-local-criterion}, the first condition is
equivalent to \(H_i\) being \(2\)-admissible. By
Proposition~\ref{prop:odd-primary-local-criterion}, the second condition is
equivalent to \(H_i\) being \(p\)-admissible. This proves the equivalence of
\textup{(a)} and \textup{(b)}.
\end{proof}

We now compare the field-theoretic admissibility conditions from
Definition~\ref{def:admissible-source-fields} with the stabilizer conditions of
Definition~\ref{def:admissible-stabilizers}.

In the full splitting case, put
\[
\widetilde L:=E^{1\times S_2}.
\]
Thus \(\widetilde L/k\) is the Galois closure of \(L/k\) inside \(E\), with
Galois group \(S_{n+1}\). Choose a primitive element \(L=k(\alpha_0)\), and let
\(\alpha_0,\dots,\alpha_n\) be its \(S_{n+1}\)-conjugates in \(\widetilde L\).
Write
\[
L_i:=k(\alpha_i),
\qquad
\Omega:=\{0,\dots,n\}.
\]
Thus the \(k\)-conjugates of \(L\) inside \(\widetilde L\) are precisely
\(L_0,\dots,L_n\).

\begin{lemma}\label{lem:field-subgroup-admissible}
Let \(F_0=E^H\) for a subgroup \(H\le G\). Then:

\begin{enumerate}[label=\textup{(\roman*)}]
\item \(F_0\) is \(2\)-admissible if and only if \(H\) is \(2\)-admissible.

\item For every odd prime \(p\mid(n+1)\), \(F_0\) is \(p\)-admissible if and
only if \(H\) is \(p\)-admissible.
\end{enumerate}
\end{lemma}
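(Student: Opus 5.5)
The plan is to translate each clause of Definition~\ref{def:admissible-source-fields} through the Galois correspondence for \(E/k\) with \(\operatorname{Gal}(E/k)\cong G=S_{n+1}\times S_2\), and compare it with Definition~\ref{def:admissible-stabilizers}. The basic dictionary is as follows. \(\widetilde L=E^{1\times S_2}\), and \(K=E^{S_{n+1}\times 1}\). The conjugate \(L_i\) corresponds to \(\operatorname{Stab}(i)\times S_2\). Consequently \(L_iL_j=E^{S_{n-1}^{(i,j)}\times S_2}\), where \(S_{n-1}^{(i,j)}\) is the pointwise stabilizer of \(i,j\). Inclusion of fields reverses inclusion of subgroups, and \(F^\sharp/R\) Galois means that \(\operatorname{Gal}(E/F^\sharp)\) is normal in \(\operatorname{Gal}(E/R)\).

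Part (ii) is the easy case and I would do it first. The inclusion \(K\subseteq F_0\) is equivalent to \(H\le S_{n+1}\times 1\). In that case \([F_0:K]=[S_{n+1}\times 1:H]\). Since \(p\) is odd, \(p\nmid[S_{n+1}\times1:H]\) holds iff \(H\) contains a Sylow \(p\)-subgroup of \(S_{n+1}\times 1\), which is also a Sylow \(p\)-subgroup of \(G\). This is exactly the \(p\)-admissibility condition.

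For part (i), the condition \(F_0\subseteq\widetilde L\) means \(1\times S_2\le H\), so \(H=J\times S_2\); both definitions require this. Fix two distinct points \(i\ne j\) with \(L'=L_i\) and \(L''=L_j\). The extension \(\widetilde L/L'L''\) has group \(S_{n-1}^{(i,j)}\). A maximal odd-degree subextension of it is \(F^\sharp=E^{P\times S_2}\) with \(P\) a Sylow \(2\)-subgroup of \(S_{n-1}^{(i,j)}\); all such \(P\) are conjugate to \(P_1\). The requirement \(FL'L''\subseteq F^\sharp\) translates to \(P\le J\). The smallest \(R\) with \(F_0\subseteq R\subseteq F^\sharp\) and \(F^\sharp/R\) Galois corresponds to the largest subgroup of \(J\) normalizing \(P\), namely \(N_J(P)\). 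So \(R=E^{N_J(P)\times S_2}\), and the condition \(L'\subseteq R\) becomes \(N_J(P)\le\operatorname{Stab}(i)\), that is, \(N_J(P)\) fixes \(i\).

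It remains to match the fixed point. The fixed-point set \(\Omega^P\) has exactly three points and contains \(i,j\) (conjugate to the choice \(\Omega^{P_1}=\{n-2,n-1,n\}\)). Hence the field condition says that \(N_J(P)\) fixes a point of \(\Omega^P\), namely \(i\). Conversely, given \(P={}^gP_1\le J\) with \(N_J(P)\) fixing some \(i\in\Omega^P\), I would pick another point \(j\in\Omega^P\setminus\{i\}\). Then \(P\) is Sylow in \(S_{n-1}^{(i,j)}\), since it has the same order as \(P_1\) and fixes \(i,j\). Taking \(L'=L_i\), \(L''=L_j\) and \(F^\sharp=E^{P\times S_2}\) produces the required data.

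The main obstacle is making the "smallest \(R\)" step rigorous. The subgroups \(K'\) with \(P\le K'\le J\) and \(P\trianglelefteq K'\) all lie in \(N_J(P)\), and \(N_J(P)\) itself qualifies, so it is the unique maximal one and \(R\) is well defined. I would also double-check that "maximal odd-degree subextension containing \(FL'L''\)" corresponds exactly to a Sylow \(2\)-subgroup of \(S_{n-1}^{(i,j)}\) lying in \(J\). For this I would note that \(J\cap S_{n-1}^{(i,j)}\) contains a Sylow subgroup of \(S_{n-1}^{(i,j)}\) in the relevant case. Then I would adjust the quantifiers: the definition says "there exists" such an \(F^\sharp\), which matches "there exists \(g\)" in Definition~\ref{def:admissible-stabilizers}.
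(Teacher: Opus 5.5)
Your proposal is correct and follows essentially the same route as the paper. Both arguments translate each clause through the Galois correspondence: \(F^\sharp=\widetilde L^{P}\) with \(P\) a Sylow \(2\)-subgroup of the two-point stabilizer, \(R=\widetilde L^{N_J(P)}\), and \(L'\subseteq R\) if and only if \(N_J(P)\) fixes the corresponding point, using \(|\Omega^P|=3\) to choose the second conjugate; part (ii) uses \([F_0:K]=[S_{n+1}:J]\). Your explicit check that \(N_J(P)\) is the unique largest subgroup between \(P\) and \(J\) normalizing \(P\) supplies a justification the paper only asserts.
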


\begin{proof}
We first prove \textup{(i)}. By the Galois correspondence,
\[
F_0\subseteq \widetilde L
\quad\Longleftrightarrow\quad
1\times S_2\le H.
\]
In this case,
\[
H=J\times S_2,
\qquad
J=\operatorname{Gal}(\widetilde L/F_0)\le S_{n+1}.
\]

Assume first that \(H\) is \(2\)-admissible. Then \(J\) satisfies the
\(2\)-admissibility criterion. Hence there exist distinct indices \(i,j\in\Omega\)
and
\[
P\in \operatorname{Syl}_2(S_{\Omega\setminus\{i,j\}})
\]
such that
\[
P\le J
\]
and \(N_J(P)\) fixes a point of \(\Omega^P\). Let \(t\in\Omega^P\) be such a fixed
point. Since \(P\) is conjugate to \(P_1\), the fixed-point set \(\Omega^P\) has
three elements. Hence we may choose \(s\in\Omega^P\), \(s\neq t\).

Since \(P\) fixes both \(t\) and \(s\), we have
\[
P\le S_{\Omega\setminus\{t,s\}}.
\]
Moreover, \(P\) has the order of a Sylow \(2\)-subgroup of
\(S_{\Omega\setminus\{i,j\}}\), and
\[
|S_{\Omega\setminus\{i,j\}}|
=
|S_{\Omega\setminus\{t,s\}}|.
\]
Therefore
\[
P\in \operatorname{Syl}_2(S_{\Omega\setminus\{t,s\}}).
\]

Set
\[
F^\sharp:=\widetilde L^P.
\]
Since
\[
L_tL_s=\widetilde L^{S_{\Omega\setminus\{t,s\}}},
\]
the field \(F^\sharp\) is a maximal odd-degree subextension of
\[
\widetilde L/L_tL_s.
\]
Also, since
\[
P\le J\cap S_{\Omega\setminus\{t,s\}},
\]
we have
\[
F_0L_tL_s
=
\widetilde L^{J\cap S_{\Omega\setminus\{t,s\}}}
\subseteq
\widetilde L^P
=
F^\sharp.
\]

Let \(R\) be the smallest intermediate field
\[
F_0\subseteq R\subseteq F^\sharp
\]
such that \(F^\sharp/R\) is Galois. As before, this field is
\[
R=\widetilde L^{N_J(P)}.
\]
Since \(N_J(P)\) fixes \(t\), we have
\[
N_J(P)\le S_{\Omega\setminus\{t\}},
\]
and hence
\[
L_t=\widetilde L^{S_{\Omega\setminus\{t\}}}\subseteq
\widetilde L^{N_J(P)}=R.
\]
Thus \(F_0\) is \(2\)-admissible in the sense of
Definition~\ref{def:admissible-source-fields}, with
\[
L'=L_t,
\qquad
L''=L_s.
\]

Conversely, assume that \(F_0\) is \(2\)-admissible in the sense of
Definition~\ref{def:admissible-source-fields}. Then \(F_0\subseteq\widetilde L\),
so
\[
H=J\times S_2,
\qquad
J=\operatorname{Gal}(\widetilde L/F_0).
\]
Choose the two conjugates in the definition as
\[
L'=L_i,
\qquad
L''=L_j.
\]
Let \(F^\sharp\) be the corresponding maximal odd-degree subextension. Since
\(F^\sharp\) is a maximal odd-degree subextension of
\[
\widetilde L/L_iL_j,
\]
there is a Sylow \(2\)-subgroup
\[
P\in\operatorname{Syl}_2(S_{\Omega\setminus\{i,j\}})
\]
such that
\[
F^\sharp=\widetilde L^P.
\]
The inclusion
\[
F_0L_iL_j\subseteq F^\sharp
\]
implies
\[
P\le J\cap S_{\Omega\setminus\{i,j\}},
\]
and hence \(P\le J\).

As above, the smallest field \(R\) with
\[
F_0\subseteq R\subseteq F^\sharp
\]
and \(F^\sharp/R\) Galois is
\[
R=\widetilde L^{N_J(P)}.
\]
By \(2\)-admissibility, Definition~\ref{def:admissible-source-fields} gives
\[
L_i=L'\subseteq R.
\]
Since
\[
L_i=\widetilde L^{S_{\Omega\setminus\{i\}}},
\]
this implies
\[
N_J(P)\le S_{\Omega\setminus\{i\}}.
\]
Equivalently, \(N_J(P)\) fixes the point \(i\). Since \(i\in\Omega^P\), the image
of \(N_J(P)\) in
\[
\operatorname{Sym}(\Omega^P)\cong S_3
\]
has a fixed point. Hence \(J\) satisfies the \(2\)-admissibility criterion, and
therefore \(H=J\times S_2\) is \(2\)-admissible.

Now let \(p\mid(n+1)\) be odd. By the Galois correspondence,
\[
K\subseteq F_0
\quad\Longleftrightarrow\quad
H\le S_{n+1}\times 1.
\]
Writing \(H=J\times 1\) with \(J\le S_{n+1}\), one has
\[
[F_0:K]=[S_{n+1}:J].
\]
Thus
\[
p\nmid [F_0:K]
\]
if and only if \(J\) contains a Sylow \(p\)-subgroup of \(S_{n+1}\), equivalently
if and only if \(H\) contains a \(G\)-conjugate of \(P_p\times 1\). This is
precisely \(p\)-admissibility for \(H\).
\end{proof}

\begin{corollary}[Field-theoretic form of the degree-zero criterion]
\label{cor:field-theoretic-degree-zero}
Let \(F_1,\dots,F_r\) be intermediate fields of \(E/k\), and write
\(F_i=E^{H_i}\). There exists a relative Brauer presentation of \(H^1(k,T)\),
arising from a permutation-projective presentation of \(H^1(-,M)\), with source
fields \(F_1,\dots,F_r\), if and only if the following conditions hold:
\begin{enumerate}[label=\textup{(\roman*)}]
\item at least one \(F_i\) is \(2\)-admissible;

\item for every odd prime \(p\mid(n+1)\), at least one \(F_i\) is
\(p\)-admissible.
\end{enumerate}
\end{corollary}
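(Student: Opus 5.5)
The corollary follows by combining Theorem~\ref{thm:degree-zero-local-classification} with Lemma~\ref{lem:field-subgroup-admissible}, using the Brauer-realization dictionary of \cite[Theorem~6.1]{retforms}. The plan is to move from fields to stabilizers, apply the group-theoretic classification there, and move back.

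The first step is to fix the dictionary. Under the Galois correspondence, an intermediate field \(F_i=E^{H_i}\) corresponds to the transitive \(G\)-set \(G/H_i\). By \cite[Theorem~6.1]{retforms}, a permutation presentation
\[
FP_{\mathbb Z[X_1]}\longrightarrow FP_{\mathbb Z[X_0]}\longrightarrow H^1(-,M)\longrightarrow 0
\]
with \(X_0=\bigsqcup_i G/H_i\) produces a relative Brauer presentation of \(H^1(k,T)\) whose source groups are \(\bigoplus_i \Br(E/F_i)\). Conversely, a relative Brauer presentation "arising from a permutation-projective presentation" is, by definition, obtained this way. So the existence of a presentation with source fields \(F_1,\dots,F_r\) is equivalent to \(H^0(-,\mathbb Z[X_0])\) occurring as the degree-zero term of some permutation presentation of \(H^1(-,M)\).

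By Theorem~\ref{thm:degree-zero-local-classification}, that occurrence holds exactly when at least one \(H_i\) is \(2\)-admissible and, for each odd prime \(p\mid(n+1)\), at least one \(H_i\) is \(p\)-admissible in the sense of Definition~\ref{def:admissible-stabilizers}. Lemma~\ref{lem:field-subgroup-admissible} then converts each stabilizer condition into the corresponding field condition of Definition~\ref{def:admissible-source-fields}, applied index by index. Quantifying "there exists \(i\)" on both sides gives the equivalence of the two lists of conditions.

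The main obstacle is the bookkeeping in the first step, not any new computation. Two points need care.

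\begin{enumerate}[label=\textup{(\arabic*)}]
\item Admissibility must be conjugation-invariant, so that the result does not depend on choosing \(H_i\) versus a conjugate, that is, \(F_i\) versus a \(k\)-conjugate field. For the subgroup conditions this is built into Definition~\ref{def:admissible-stabilizers}: the conjugate \({}^gP_1\) and the Sylow subgroup are only fixed up to conjugacy. Lemma~\ref{lem:field-subgroup-admissible} then transfers this invariance to the fields.
\item Repeated source fields, meaning several orbits with the same stabilizer, must be allowed. Theorem~\ref{thm:degree-zero-local-classification} already works with an arbitrary finite \(G\)-set \(X_0\), so this is covered.
\end{enumerate}

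I would also note that the "only if" direction needs the source fields to be read off from the degree-zero permutation lattice \(\mathbb Z[X_0]\), which is exactly what the realization theorem provides.
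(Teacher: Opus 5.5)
Your proposal is correct and is essentially the paper's proof: identify the source fields with the stabilizers \(H_i=\operatorname{Gal}(E/F_i)\) of the orbits of \(X_0\), apply Theorem~\ref{thm:degree-zero-local-classification}, and convert each stabilizer condition into the corresponding field condition using Lemma~\ref{lem:field-subgroup-admissible}. Your remarks on conjugation invariance and repeated orbits are harmless bookkeeping that the paper leaves implicit.
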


\begin{proof}
By Theorem~\ref{thm:degree-zero-local-classification}, the degree-zero term can
occur precisely when the stabilizers \(H_i\) include at least one
\(2\)-admissible subgroup and, for every odd prime \(p\mid(n+1)\), at least one
\(p\)-admissible subgroup. By Lemma~\ref{lem:field-subgroup-admissible}, this is
equivalent to the stated admissibility conditions on the fields \(F_i=E^{H_i}\).
\end{proof}

\begin{remark}
Corollary~\ref{cor:field-theoretic-degree-zero} classifies the possible source
fields in the degree-zero Brauer term. It does not determine the target fields,
or equivalently the restriction and corestriction relations; these depend on the
chosen degree-one permutation term and the differential of the presentation.
\end{remark}

\medskip
\bibliographystyle{alpha}

\end{document}